\newif\ifsiopt
\providecommand{\etal}		{\emph{et al\@.}\xspace}
\providecommand{\ie}		{\emph{i.e\@.}\xspace}
\providecommand{\eg}		{\emph{e.g\@.}\xspace}
\providecommand{\myurl}[1][]	{\texttt{web.eecs.umich.edu/$\sim$fessler#1}\xspace}
\providecommand{\onweb}[1]	{Available from \myurl.}
\long\def\comment#1{}
\providecommand{\bcent}		{\begin{center}}
\providecommand{\ecent}		{\end{center}}
\providecommand{\benum}		{\begin{enumerate}}
\providecommand{\eenum}		{\end{enumerate}}
\providecommand{\bitem}		{\begin{itemize}}
\providecommand{\eitem}		{\end{itemize}}
\providecommand{\bvers}		{\begin{verse}}
\providecommand{\evers}		{\end{verse}}
\providecommand{\btab}		{\begin{tabbing}}	
\providecommand{\etab}		{\end{tabbing}}
\newcounter{blist}
\providecommand{\blistmark}	{\makebox[0pt]{$\bullet$}}
\providecommand{\blistitemsep}	{0pt}
\providecommand{\blist}[1][]	{%
\begin{list}{\blistmark}{%
\usecounter{blist}%
\setlength{\itemsep}{\blistitemsep}%
\setlength{\parsep}{0pt}%
\setlength{\parskip}{0pt}%
\setlength{\partopsep}{0pt}%
\setlength{\topsep}{0pt}%
\setlength{\leftmargin}{1.2em}%
\setlength{\labelsep}{0.5\leftmargin}
\setlength{\labelwidth}{0em}%
#1}
}
\providecommand{\elist}		{\end{list}}
\providecommand{\blistitemsep}	{0pt}
\providecommand{\bjfenum}[1][]	{%
\begin{list}{\bcolor{\arabic{blist}.} }{%
\usecounter{blist}%
\setlength{\itemsep}{\blistitemsep}%
\setlength{\parsep}{0pt}%
\setlength{\parskip}{0pt}%
\setlength{\partopsep}{0pt}%
\setlength{\topsep}{0pt}%
\setlength{\leftmargin}{0.0em}%
\setlength{\labelsep}{1.0\leftmargin}
\setlength{\labelwidth}{0pt}%
#1}
}
\newcounter{blistAlph}
\providecommand{\blistAlph}[1][]
{\begin{list}{\makebox[0pt][l]{\Alph{blistAlph}.}}{%
\usecounter{blistAlph}%
\setlength{\itemsep}{0pt}\setlength{\parsep}{0pt}%
\setlength{\parskip}{0pt}\setlength{\partopsep}{0pt}%
\setlength{\topsep}{0pt}%
\setlength{\leftmargin}{1.2em}%
\setlength{\labelsep}{1.0\leftmargin}
\setlength{\labelwidth}{0.0\leftmargin}#1}%
}
\newcounter{blistRoman}
\providecommand{\blistRoman}[1][]
{\begin{list}{\Roman{blistRoman}.}{%
\usecounter{blistRoman}%
\setlength{\itemsep}{0.5em}\setlength{\parsep}{0pt}%
\setlength{\parskip}{0pt}\setlength{\partopsep}{0pt}%
\setlength{\topsep}{0pt}%
\setlength{\leftmargin}{4em}%
\setlength{\labelsep}{0.4\leftmargin}
\setlength{\labelwidth}{0.6\leftmargin}#1}%
}
\providecommand{\norm}[1]	{\xmath{\left\| #1 \right\|}}
\providecommand{\floor}[1]	{\xmath{\left\lfloor #1 \right\rfloor}}
\providecommand{\inprod}[2]	{\xmath{\mathop{\langle #1,\, #2 \rangle}\nolimits}}
\providecommand{\Inprod}[2]	{\xmath{\left\langle #1,\ #2 \right\rangle}}
\let\equivsave\equiv
\def\equiv{\xmath{\equivsave}}
\providecommand{\ba}[1]		{\left[ \begin{array}{#1}}
\providecommand{\ea}		{\end{array} \right]}
\providecommand{\be}		{\begin{equation}}
\providecommand{\ee}[1]		{\label{#1}\end{equation}}
\providecommand{\bea}		{\begin{eqnarray}}
\providecommand{\eea}[1]	{\label{#1}\end{eqnarray}}
\providecommand{\beas}		{\begin{eqnarray*}}
\providecommand{\eeas}		{\end{eqnarray*}}
\providecommand{\beals}[1][1]	{\begin{alignat*}{#1}}	
\providecommand{\eeals}		{\end{alignat*}}
\providecommand{\berr}[2]{
\bgroup
\renewcommand{\theequation}{#1}
\be
#2
\ee{e,#1}
\egroup
\ignorespaces
}
\providecommand{\bearr}[2]{
\bgroup
\renewcommand{\theequation}{#1}
\bea
#2
\eea{e,#1}
\egroup
\ignorespaces
}
\providecommand{\inmath}	{\ensuremath}
\providecommand{\xmath}[1]	{\inmath{#1}\xspace}
\providecommand{\bmath}[1]	{\xmath{\bm{#1}}}	
\providecommand{\paren}[1]	{\xmath{\left(#1\right)}}
\providecommand{\braces}[1]	{\xmath{\left\{#1\right\}}}
\providecommand{\Frac}[2]	{\xmath{{#1}/{#2}}}
\providecommand{\wordbrace}[3][]{\,\inmath{\mathrm{#2}#1\!\braces{#3}}}
\providecommand{\diag}[1]	{\wordbrace{diag}{#1}}
\newcolumntype{L}{>{\arraybackslash}p{4.4em}}
\newcommand{\st} {\xmath{\text{s.t.}\:}}
\newcommand{\tnabla} {\xmath{\tilde{\nabla}_{\!L}}}
\newcommand{\tnablasig} {\xmath{\tilde{\nabla}_{\!\Frac{L}{\sig^2}}}}
\newcommand{\fNh} {\xmath{\floor{\frac{N}{2}}}}
\newcommand{\fNm} {\xmath{\floor{\frac{2N}{3}}}}
\renewcommand{\Om} {\xmath{\Omega_N}}
\newcommand{\bmw} {\bmath{w}}
\newcommand{\bmlam} {\bmath{\lambda}}
\newcommand{\bmtau} {\bmath{{\tau}}}
\newcommand{\bmh} {\bmath{h}}
\newcommand{\g} {\bmath{g}}
\newcommand{\G} {\bmath{G}}
\newcommand{\bmdel} {\bmath{\delta}}
\newcommand{\bmu} {\bmath{u}}
\newcommand{\bmv} {\bar{\bmu}}
\newcommand{\bme} {\bmath{e}}
\newcommand{\x} {\bmath{x}}
\newcommand{\y} {\bmath{y}}
\newcommand{\z} {\bmath{z}}
\newcommand{\bmnu} {\bmath{\nu}}
\newcommand{\bmS} {\bmath{S}}
\newcommand{\A} {\bmath{\check{A}}}
\newcommand{\D} {\bmath{\check{D}}}
\newcommand{\bmbeta} {\bmath{{\beta}}}
\newcommand{\tG} {\bar{\G}}
\newcommand{\tg} {\bar{\g}}
\newcommand{\bA} {\bmath{\bar{A}}}
\newcommand{\bD} {\bmath{\bar{D}}}
\newcommand{\Zero} {\bmath{0}}
\renewcommand{\tt} {\bmath{t}}
\newcommand{\ttt} {\bar{\tt}}
\newcommand{\TT} {\bmath{T}}
\newcommand{\cC} {\xmath{\mathcal{C}_L^{1,1}(\Reals^d)}}
\newcommand{\cF} {\xmath{\mathcal{F}_L(\Reals^d)}}
\newcommand{\cL} {\xmath{\mathcal{L}}}
\newcommand{\Reals} {\xmath{\mathbb{R}}}
\newcommand{\hki} {\xmath{h_{i,k}}}
\newcommand{\hkip} {\xmath{h_{i+1,k}}}
\newcommand{\himi} {\xmath{h_{i,i-1}}}
\newcommand{\hkj} {\xmath{h_{j,k}}}
\newcommand{\hiip} {\xmath{h_{i+1,i}}}
\newcommand{\himip} {\xmath{h_{i+1,i-1}}}
\newcommand{\hkn} {\xmath{h_{n,k}}}
\newcommand{\hknp} {\xmath{h_{n+1,k}}}
\newcommand{\pL} {\xmath{\bm{\mathrm{p}}_L^{ }}}
\newcommand{\pLsig} {\xmath{\bm{\mathrm{p}}_{\Frac{L}{\sigma^2}}^{ }}}
\renewcommand{\P} {\xmath{\mathrm{P}}}
\newcommand{\I} {\xmath{\mathrm{I}}}
\newcommand{\FO} {{FSFOM}\xspace}
\newcommand{\cgm} {{composite gradient mapping}\xspace}
\newcommand{\m} {{\xmath{m}}\xspace}
\newcommand{\OPG} {{OCG}\xspace}
\numberwithin{equation}{section}
	\numberwithin{theorem}{section}
\newcommand{\TheTitle}{Another look at the fast iterative shrinkage/thresholding 
			algorithm (FISTA)}
\newcommand{\TheAuthors}{D. Kim, and J. A. Fessler}
	\title{{\TheTitle}\thanks{Submitted to the editors \today.
	\funding{This research was supported in part by NIH grant U01 EB018753.}}}
	\author{
  	Donghwan Kim\thanks{Dept. of Electrical Engineering and Computer Science,
        	University of Michigan, Ann Arbor, MI 48109 USA
       		(\email{kimdongh@umich.edu}, \email{fessler@umich.edu})}
	\and 
	Jeffrey A. Fessler\footnotemark[2] 
	}
	\title{\TheTitle
		\thanks{This research was supported in part by NIH grant U01 EB018753.}}
	\author{Donghwan Kim \and Jeffrey A. Fessler}
	\institute{Donghwan Kim \and Jeffrey A. Fessler \at
              Dept. of Electrical Engineering and Computer Science,
		University of Michigan, Ann Arbor, MI 48109 USA \\
              \email{kimdongh@umich.edu, fessler@umich.edu}         
	}
	\date{Date of current version: \today} 
\begin{document}

\maketitle

\begin{abstract}
This paper provides a new way of developing
the fast iterative shrinkage/thresh
olding algorithm (FISTA)
\cite{beck:09:afi}
that is widely used for minimizing
composite convex functions
with a nonsmooth term such as the $\ell_1$ regularizer.
In particular, this paper 
shows that FISTA
corresponds to an optimized approach
to accelerating the proximal gradient method
with respect to a worst-case bound of the cost function.
This paper then proposes 
a new algorithm
that is derived
by instead optimizing the step coefficients of the proximal gradient method 
with respect to a worst-case bound of the~\cgm.
The proof is based on the worst-case analysis
called Performance Estimation Problem in~\cite{drori:14:pof}.
\end{abstract}

\ifsiopt

\begin{keywords}
First-order algorithms, 
Proximal gradient methods, 
Convex minimization, 
Worst-case performance analysis
\end{keywords}

\begin{AMS}
90C25, 90C30, 90C60, 68Q25, 49M25, 90C22
\end{AMS}

\fi

\section{Introduction}
\label{intro}

The fast iterative shrinkage/thresholding algorithm 
(FI
STA)~\cite{beck:09:afi},
also known as a fast proximal gradient method (FPGM) in general,
is a very widely used fast first-order method.
FISTA's speed arises from Nesterov's accelerating technique
in~\cite{nesterov:83:amf,nesterov:04}
that improves the $O(1/N)$ cost function worst-case bound
of a proximal gradient method (PGM)
to the optimal $O(1/N^2)$ rate
where $N$ denotes the number of iterations
\cite{beck:09:afi}.

This paper first provides
a new way to develop
Nesterov's acceleration approach, \ie, FISTA (FPGM).
In particular, we show that FPGM corresponds
to an optimized approach to accelerating PGM
with respect to a worst-case bound of the cost function.
We then propose 
a new fast algorithm that is derived from PGM
by instead optimizing a worst-case bound of the~\cgm.
We call this new method
FPGM-\OPG (\OPG for optimized over~\cgm).
This new method 
provides the best known analytical worst-case bound
for decreasing the~\cgm with rate $O(1/N^{\frac{3}{2}})$
among fixed-step first-order methods.
The proof is based on the worst-case bound analysis
called Performance Estimation Problem (PEP) in~\cite{drori:14:pof},
which we briefly review next.

Drori and Teboulle's PEP~\cite{drori:14:pof}
casts a worst-case analysis 
for a given optimization method and a given class of optimization problems
into a meta-optimization problem.
The original PEP has been intractable to solve exactly,
so~\cite{drori:14:pof} introduced a series of tractable relaxations,
focusing on first-order methods and smooth convex minimization problems;
this PEP and its relaxations were studied
for various algorithms and minimization problem classes in
\cite{drori:16:aov,kim:16:gto,kim:16:ofo,kim:17:otc,lessard:16:aad,taylor:17:ewc,taylor:17:ssc}.
Drori and Teboulle~\cite{drori:14:pof} 
further proposed to optimize the step coefficients
of a given class of optimization methods using a PEP.
This approach was studied for first-order methods
on unconstrained smooth convex minimization problems
in~\cite{drori:14:pof},
and the authors~\cite{kim:16:ofo} derived 
a new first-order method,
called an optimized gradient method (OGM)
that has an analytic worst-case bound on the cost function
that is twice smaller than the previously best known bounds 
of~\cite{nesterov:83:amf,nesterov:04}.
Recently, Drori~\cite{drori:17:tei} showed
that the OGM exactly achieves the optimal cost function worst-case bound 
among first-order methods
for smooth convex minimization (in high-dimensional problems).

Building upon~\cite{drori:14:pof} and its successors,
Taylor~\etal~\cite{taylor:17:ewc}
expanded the use of
PEP to 
first-order (proximal gradient) methods
for minimizing nonsmooth composite convex functions. 
They used a tight relaxation\footnote{
Tight relaxation here denotes
transforming (relaxing) an optimization problem into a solvable problem
while their solutions remain the same.
\cite{taylor:17:ewc}
tightly relaxes the PEP into a solvable equivalent problem
under a large-dimensional condition.
}
for PEP
and studied the tight (exact) 
numerical worst-case bounds
of FPGM, a proximal gradient version of OGM, and some variants
versus number of iterations $N$.
Their numerical results suggest that
there exists an OGM-type acceleration of PGM
that has a worst-case cost function bound
that is about twice smaller than that of FPGM,
showing room for improvement in 
accelerating PGM.
However, 
it is difficult to derive an analytical worst-case bound
for the tightly relaxed PEP in~\cite{taylor:17:ewc},
so optimizing
the step coefficients of PGM
remains an open problem,
unlike~\cite{drori:14:pof,kim:16:ofo}
for smooth convex minimization.

Different from the tightly relaxed PEP in~\cite{taylor:17:ewc},
this paper suggests a new (looser) relaxation of a cost function form of PEP
for nonsmooth composite convex minimization
that simplifies analysis and
optimization of step coefficients of PGM,
although yields loose worst-case bounds.
Interestingly, the resulting optimized PGM
numerically appears to be the FPGM. 
Then, we further provide a new generalized version of FPGM
using our relaxed PEP
that extends our understanding of the FPGM variants.

This paper
next extends
the PEP analysis of the gradient norm
in~\cite{taylor:17:ewc,taylor:17:ssc}.
For unconstrained smooth convex minimization,
the authors~\cite{kim:16:gto} used such PEP to optimize the step coefficients
with respect to the gradient norm.
The corresponding optimized algorithm can be 
useful particularly when dealing with dual problems
where the gradient norm decrease is important
in addition to the cost function minimization
(see \eg, \cite{devolder:12:dst,necoara:16:ica,nesterov:12:htm}).
By extending~\cite{kim:16:gto}, this paper optimizes 
the step coefficients 
of the PGM for
the~\cgm form of PEP for nonsmooth composite convex minimization.
The resulting optimized algorithm differs somewhat from Nesterov's acceleration
and turns out to belong to 
the proposed generalized FPGM class.

Sec.~\ref{sec:prob} describes 
a nonsmooth composite convex minimization problem
and first-order (proximal gradient) methods.
Sec.~\ref{sec:pep,cost}
proposes a new relaxation of PEP
for nonsmooth composite convex minimization problems
and the proximal gradient methods,
and suggests that the FPGM (FISTA)~\cite{beck:09:afi}
is the optimized method of 
the cost function form of this relaxed PEP.
Sec.~\ref{sec:pep,cost} further
proposes a generalized version of FPGM using the relaxed PEP.
Sec.~\ref{sec:pep,spgrad}
studies the~\cgm form of the relaxed PEP
and describes a new optimized method
for decreasing the norm of~\cgm.
Sec.~\ref{sec:disc} 
compares the various algorithms considered,
and Sec.~\ref{sec:conc} concludes.

\section{Problem, methods, and contribution}
\label{sec:prob}

We consider first-order algorithms
for solving the nonsmooth composite convex minimization problem:
\begin{align}
\min_{\x\in\Reals^d} \;&\; \braces{F(\x) := f(\x) + \phi(\x)}
\label{eq:prob} \tag{M}
,\end{align}
under the following assumptions:
\begin{itemize}[leftmargin=40pt]
\item
$f\;:\;\Reals^d\rightarrow\Reals$
is a convex function of the type \cC, i.e., continuously differentiable with
Lipschitz continuous gradient:
\begin{align}
||\nabla f(\x) - \nabla f(\y)|| \le L||\x - \y||, \quad \forall \x, \y \in \Reals^d,
\label{eq:L}
\end{align}
where $L > 0$ is the Lipschitz constant.
\item
$\phi\;:\;\Reals^d\rightarrow\Reals$ is
a proper, closed, convex and 
``proximal-friendly''~\cite{combettes:11:psm} function.
\item
The optimal set $X_*(F)=\argmin{\x\in\Reals^d} F(\x)$ is nonempty,
\ie, the problem~\eqref{eq:prob} is solvable.
\end{itemize}
We use \cF to denote the class of functions $F$ that satisfy the above conditions.
We additionally assume that
the distance between the initial point $\x_0$
and an optimal solution $\x_* \in X_∗(F)$ is bounded by
$R > 0$, i.e., $||\x_0 - \x_*|| \le R$.

PGM is a standard first-order method 
for solving the problem~\eqref{eq:prob}~\cite{beck:09:afi,combettes:11:psm},
particularly when the following proximal gradient update
(that consists of a gradient descent step 
and a proximal operation~\cite{combettes:11:psm})
is relatively simple:
\begin{align}
\pL(\y)
        &:= \argmin{\x} \braces{f(\y) + \inprod{\x - \y}{\nabla f(\y)}
		+ \frac{L}{2}||\x - \y||^2 + \phi(\x)}
		\label{eq:pm} \\
	&= \argmin{\x} \braces{\frac{L}{2}\norm{\x
                - \paren{\y - \frac{1}{L}\nabla f(\y)}}^2 + \phi(\x)}
	\nonumber
.\end{align}
For $\phi(\x) = ||\x||_1$,
the update~\eqref{eq:pm} 
becomes a simple shrinkage/thresholding update,
and PGM reduces to 
an iterative shrinkage/thresholding algorithm (ISTA)~\cite{daubechies:04:ait}.
(See~\cite[Table 10.2]{combettes:11:psm} for more functions $\phi(\x)$ 
that lead to simple proximal operations.)

\fbox{
\begin{minipage}[t]{0.85\linewidth}
\vspace{-10pt}
\begin{flalign}
&\quad \text{\bf Algorithm PGM} & \nonumber \\
&\quad \text{Input: } f\in \cF,\; \x_0\in\Reals^d. & \nonumber \\
&\quad \text{For } i = 0,\ldots,N-1 & \nonumber \\
&\quad \qquad \x_{i+1} = \pL(\x_i) & \nonumber
\end{flalign}
\end{minipage}
} \vspace{5pt}

\noindent
PGM has the following bound 
on the cost function
\cite[Thm. 3.1]{beck:09:afi}
for any $N\ge1$:
\begin{align}
F(\x_N) - F(\x_*)
        \le \frac{LR^2}{2N}
\label{eq:fv_pgm}
.\end{align}

For simplicity in later derivations,
we use the following definition of the~\cgm~\cite{nesterov:13:gmf}:
\begin{align}
\tnabla F(\x) := -L\,(\pL(\x) - \x)
.\end{align}
The~\cgm reduces to 
the usual function gradient $\nabla f(\x)$ when $\phi(\x) = 0$.
We can then rewrite the PGM update in the following form
reminiscent of a gradient method:
\begin{align}
\x_{i+1} = \pL(\x_i) = \x_i - \frac{1}{L}\tnabla F(\x_i)
,\end{align}
where each update guarantees 
the following monotonic cost function descent
\cite[Thm.~1]{nesterov:13:gmf}:
\begin{align}
F(\x_i) - F(\x_{i+1}) \ge \frac{1}{2L}||\tnabla F(\x_i)||^2
\label{eq:pg_decr}
.\end{align}

For any $\x\in\Reals^d$, 
there exists a subgradient $\phi'(\pL(\x)) \in \partial \phi(\pL(\x))$
that satisfies the following equality~\cite[Lemma 2.2]{beck:09:afi}:
\begin{align}
\tnabla F(\x) = \nabla f(\x) + \phi'(\pL(\x))
\label{eq:tnabla}
.\end{align}
This equality implies that
any point $\bar{\x}$ 
with a zero~\cgm
($\tnabla F(\bar{\x}) = 0$, \ie, $\bar{\x} = \pL(\bar{\x})$)
satisfies $0 \in \partial F(\bar{\x})$
and is a minimizer of~\eqref{eq:prob}.
As discussed, minimizing the~\cgm
is noteworthy
in addition to decreasing the cost function.
This property becomes particularly important
when dealing with dual problems.
In particular, it is known that
the norm of the dual (sub)gradient 
is related to the primal feasibility
(see \eg,~\cite{devolder:12:dst,necoara:16:ica,nesterov:12:htm}).
Furthermore,
the norm of the subgradient
is upper bounded by the norm of the~\cgm, \ie, 
for any given subgradients
$\phi'(\pL(\x))$ in~\eqref{eq:tnabla}
and $F'(\pL(\x)) := \nabla f(\pL(\x)) + \phi'(\pL(\x)) \in \partial F(\pL(\x))$,
we have
\begin{align}
||F'(\pL(\x))||
&\le ||\nabla f(\x) - \nabla f(\pL(\x))|| + ||\nabla f(\x) + \phi'(\pL(\x))|| 
	\label{eq:dualgrad} \\
&\le 2L||\x - \pL(\x)||
= 2||\tnabla F(\pL(\x))|| 
\nonumber
,\end{align}
where the first inequality uses the triangle inequality
and the second inequality uses \eqref{eq:L} and~\eqref{eq:tnabla}.
This inequality provides a close relationship between
the primal feasibility
and the dual~\cgm.
Therefore, we next analyze the worst-case bound of the~\cgm of PGM;
Sec.~\ref{sec:pep,spgrad} below discusses
a first-order algorithm that is optimized 
with respect to the~\cgm.\footnote{
One could develop a first-order algorithm
that is optimized with respect to the norm of the subgradient
(rather than its upper bound in Sec.~\ref{sec:pep,spgrad}),
which we leave as future work.
}

The following lemma shows that
PGM monotonically decreases the norm of the~\cgm.

\begin{lemma}
\label{lem:pgmono}
The PGM monotonically decreases the norm of~\cgm, i.e.,
for all \x:
\begin{align}
||\tnabla F(\pL(\x))|| \le ||\tnabla F(\x)||
\label{eq:pg_desc}
.\end{align}
\end{lemma}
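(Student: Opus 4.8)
The plan is to reduce the claim entirely to the nonexpansiveness of the proximal gradient operator $\pL$. Recalling the definition $\tnabla F(\x) = -L(\pL(\x) - \x) = L(\x - \pL(\x))$ and evaluating it at the point $\pL(\x)$ gives $\tnabla F(\pL(\x)) = L(\pL(\x) - \pL(\pL(\x)))$. Dividing by $L>0$, the desired bound~\eqref{eq:pg_desc} is therefore equivalent to
\begin{align}
||\pL(\x) - \pL(\pL(\x))|| \le ||\x - \pL(\x)||, \nonumber
\end{align}
which is exactly the nonexpansiveness estimate $||\pL(\bmu) - \pL(\bmw)|| \le ||\bmu - \bmw||$ applied to the pair $\bmu = \x$ and $\bmw = \pL(\x)$. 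First I would establish this reduction, so that the whole statement rests on showing that $\pL$ is nonexpansive.

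Then I would prove nonexpansiveness of $\pL$ by factoring it, as in~\eqref{eq:pm}, into a gradient-descent step followed by the proximity operator of $\frac{1}{L}\phi$, i.e.\ $\pL(\bmu) = \mathrm{prox}_{\phi/L}\big(\bmu - \frac{1}{L}\nabla f(\bmu)\big)$. The gradient step $\bmu \mapsto \bmu - \frac{1}{L}\nabla f(\bmu)$ is nonexpansive because $\nabla f$ is $\frac{1}{L}$-cocoercive, a standard consequence of the Lipschitz bound~\eqref{eq:L} together with convexity of $f$: expanding $||(\bmu - \frac{1}{L}\nabla f(\bmu)) - (\bmw - \frac{1}{L}\nabla f(\bmw))||^2$ and inserting the cocoercivity inequality $\inprod{\nabla f(\bmu) - \nabla f(\bmw)}{\bmu - \bmw} \ge \frac{1}{L}||\nabla f(\bmu) - \nabla f(\bmw)||^2$ leaves $||\bmu - \bmw||^2$ minus a nonnegative term. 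The proximity operator of the proper, closed, convex function $\frac{1}{L}\phi$ is firmly nonexpansive, hence nonexpansive, as the resolvent of the monotone operator $\partial\phi$. A composition of nonexpansive maps is nonexpansive, so $\pL$ is nonexpansive, and the reduction above finishes the proof.

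The only genuine work is the cocoercivity step; I expect that to be the main obstacle in the sense of needing care. If a self-contained argument is preferred over citing the Baillon--Haddad theorem, I would derive cocoercivity directly from the descent-type inequality for $L$-smooth convex functions. An alternative, more elementary route avoids operator-theoretic vocabulary entirely and argues straight from the optimality conditions defining $\pL(\x)$ and $\pL(\pL(\x))$, i.e.\ from~\eqref{eq:tnabla}, by combining monotonicity of the subdifferential $\partial\phi$ evaluated at the two image points with the cocoercivity of $\nabla f$; this, however, amounts to re-deriving nonexpansiveness of $\pL$ by hand and offers no real simplification.
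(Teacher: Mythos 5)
Your proposal is correct and takes essentially the same approach as the paper: the paper's proof is a one-line appeal to extending \cite[Lemma 2.4]{necoara:16:ica} via the nonexpansiveness of the proximal mapping, and your argument---reducing \eqref{eq:pg_desc} to $\|\pL(\x) - \pL(\pL(\x))\| \le \|\x - \pL(\x)\|$ and then obtaining nonexpansiveness of $\pL$ as the composition of a gradient step made nonexpansive by the cocoercivity of $\nabla f$ with the firmly nonexpansive proximity operator---is precisely the self-contained version of that cited argument. There are no gaps; the cocoercivity step you flag is indeed the only substantive ingredient, and your treatment of it is standard and correct.
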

\begin{proof}
The proof in~\cite[Lemma 2.4]{necoara:16:ica}
can be easily extended
to prove~\eqref{eq:pg_desc}
using the nonexpansiveness of the proximal mapping (proximity operator)
\cite{combettes:11:psm}.
\end{proof}

\noindent
The following theorem provides a $O(1/N)$ bound 
on the norm of~\cgm for the PGM,
using the idea in~\cite{nesterov:12:htm}
and Lemma~\ref{lem:pgmono}.

\begin{theorem}
Let $F\;:\;\Reals^d\rightarrow\Reals$ be in \cF
and let $\x_0,\cdots,\x_N \in \Reals^d$ be generated by
PGM. Then for $N\ge2$,
\begin{align}
\min_{i\in\{0,\ldots,N\}} ||\tnabla F(\x_i)|| = ||\tnabla F(\x_N)|| 
	\le \frac{2LR}{\sqrt{(N-1)(N+2)}}
\label{eq:pg_pgm}
.\end{align}
\end{theorem}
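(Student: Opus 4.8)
The equality in~\eqref{eq:pg_pgm} is immediate from Lemma~\ref{lem:pgmono}: since $\x_{i+1} = \pL(\x_i)$, the norm $||\tnabla F(\x_i)||$ is nonincreasing in $i$ by~\eqref{eq:pg_desc}, so its minimum over $i \in \{0,\ldots,N\}$ is attained at $i = N$. The substance of the theorem is the $O(1/N)$ bound on $||\tnabla F(\x_N)||$, which I would establish by a two-phase (splitting) argument in the spirit of~\cite{nesterov:12:htm}: control the cost function over a first phase of $m$ iterations, and then convert that cost decrease into a bound on the \cgm over a second phase using the monotone descent.

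For the second phase, the plan is to sum the descent inequality~\eqref{eq:pg_decr} over $i = m,\ldots,N$, \ie over $N-m+1$ steps, including the extra proximal-gradient step to $\x_{N+1} = \pL(\x_N)$ that the algorithm does not output, and to lower-bound each summand by $||\tnabla F(\x_N)||^2$ via the monotonicity~\eqref{eq:pg_desc}. Since $F(\x_{N+1}) \ge F(\x_*)$, this gives
\begin{align}
F(\x_m) - F(\x_*)
	&\ge F(\x_m) - F(\x_{N+1})
	= \sum_{i=m}^{N} \paren{F(\x_i) - F(\x_{i+1})} \nonumber \\
	&\ge \frac{1}{2L} \sum_{i=m}^{N} ||\tnabla F(\x_i)||^2
	\ge \frac{N-m+1}{2L} ||\tnabla F(\x_N)||^2
	\nonumber
.\end{align}
For the first phase I would apply the PGM cost-function bound~\eqref{eq:fv_pgm} to the $m$ iterates produced from $\x_0$ (legitimate because $||\x_0 - \x_*|| \le R$), obtaining $F(\x_m) - F(\x_*) \le \frac{LR^2}{2m}$. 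Chaining the two phases yields $||\tnabla F(\x_N)||^2 \le \frac{L^2R^2}{m(N-m+1)}$.

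It then remains to choose the split. Setting $m = \fNh$, I would check by a short case analysis on the parity of $N$ that $m(N-m+1) \ge \frac{(N-1)(N+2)}{4}$ — each parity in fact gives a strictly larger value, so this is a valid common lower bound — which immediately produces~\eqref{eq:pg_pgm}. The hypothesis $N \ge 2$ enters here to guarantee $m \ge 1$, so that the cost-function bound applies and the second-phase sum is nonempty.

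The step needing the most care is the bookkeeping in the second phase: the clean constant $(N-1)(N+2)$ depends on summing the descent over $N-m+1$ steps rather than $N-m$, \ie on retaining the terminal step $\x_N \to \x_{N+1}$ (equivalently, on using $F(\x_N) - F(\x_*) \ge \frac{1}{2L}||\tnabla F(\x_N)||^2$). Dropping that term would leave only the denominator $m(N-m)$, which at $m = \fNh$ is too small to reach the stated bound. The parity case analysis and the role of $N \ge 2$ are the remaining details.
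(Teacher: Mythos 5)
Your proposal is correct and matches the paper's proof essentially step for step: the same split at $m = \fNh$, the same use of the cost bound~\eqref{eq:fv_pgm} for the first phase, the same summation of the descent inequality~\eqref{eq:pg_decr} over $i=m,\ldots,N$ (retaining the terminal step to $\x_{N+1}$), and the same monotonicity~\eqref{eq:pg_desc} to lower-bound the sum by $\frac{N-m+1}{2L}||\tnabla F(\x_N)||^2$. Your parity case analysis giving $m(N-m+1)\ge\frac{(N-1)(N+2)}{4}$ is just an unpacked version of the paper's bounds $m\ge\frac{N-1}{2}$ and $N-m\ge\frac{N}{2}$.
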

\begin{proof}
Let $m = \fNh$, and we have
\begin{align*}
\frac{LR^2}{2m} 
	&\stackrel{\eqref{eq:fv_pgm}}{\ge}
		F(\x_m) - F(\x_*)
	\stackrel{\eqref{eq:pg_decr}}{\ge}
		 F(\x_{N+1}) - F(\x_*) + \frac{1}{2L}\sum_{i=m}^N||\tnabla F(\x_i)||^2 \\
	&\stackrel{\eqref{eq:pg_desc}}{\ge} 
		\frac{N-m+1}{2L}||\tnabla F(\x_N)||^2,
\end{align*}
which is equivalent to~\eqref{eq:pg_pgm}
using $m\ge\frac{N-1}{2}$ and $N-m\ge\frac{N}{2}$.
\end{proof}

Despite its inexpensive per-iteration computational cost,
PGM suffers from the slow rate $O(1/N)$
for decreasing both the cost function and the norm of~\cgm.\footnote{
\cite[Thm. 2]{drori:14:pof} and~\cite[Thm. 2]{kim:16:gto} imply that
the $O(1/N)$ rates of 
both the cost function bound~\eqref{eq:fv_pgm}
and the~\cgm norm bound~\eqref{eq:pg_pgm} of PGM
are tight up to a constant respectively.
}
Therefore for acceleration, this paper considers the following class 
of \emph{fixed-step} first-order methods (\FO),
where the $(i+1)$th iteration consists of one proximal gradient evaluation,
just like PGM,
and a weighted summation of previous and current proximal gradient updates
$\{\x_{k+1} - \y_k\}_{k=0}^i$ 
with step coefficients $\{h_{i+1,k}\}_{k=0}^i$.

\fbox{
\begin{minipage}[t]{0.85\linewidth}
\vspace{-10pt}
\begin{flalign}
&\quad \text{\bf Algorithm Class~\FO} & \nonumber \\
&\quad \text{Input: } f\in \cF,\; \x_0\in\Reals^d,\; \y_0 = \x_0. & \nonumber \\
&\quad \text{For } i = 0,\ldots,N-1 & \nonumber \\
&\quad \qquad \x_{i+1} = \pL(\y_i)
			= \y_i - \frac{1}{L}\tnabla F(\y_i) & \nonumber \\
&\quad \qquad \y_{i+1} = \y_i + \sum_{k=0}^i \hkip\,(\x_{k+1} - \y_k)
		= \y_i - \frac{1}{L}\sum_{k=0}^i \hkip\,\tnabla F(\y_k).\!\!\!\!\! & 
		\nonumber
\end{flalign}
\end{minipage}
} \vspace{5pt}

Although the weighted summation in~\FO
seems at first to be inefficient
both computationally and memory-wise,
the optimized~\FO presented in this paper
have equivalent recursive forms that
have memory and computation requirements
that are similar to PGM.
Note that this class~\FO includes PGM
but excludes accelerated algorithms in
\cite{ghadimi:16:agm,nesterov:05:smo,nesterov:13:gmf}
that combine the proximal operations
and the gradient steps in other ways.

Among~\FO\footnote{
The step coefficients of~\FO for FPGM are~\cite{drori:14:pof,kim:16:ofo}
\[ 
h_{i+1,k}
        = \begin{cases}
                \frac{1}{t_{i+1}}
                        \left(t_k - \sum_{j=k+1}^i h_{j,k}\right), & k=0,\ldots,i-1, \\
                1 + \frac{t_i - 1}{t_{i+1}}, & k=i.
        \end{cases}
\] 
}, 
FISTA~\cite{beck:09:afi},
also known as FPGM, is widely used
since it has computation and memory requirements that are similar to PGM
yet it achieves the optimal $O(1/N^2)$ worst-case rate 
for decreasing the cost function,
using Nesterov's acceleration technique~\cite{nesterov:83:amf,nesterov:04}.

\fbox{
\begin{minipage}[t]{0.85\linewidth}
\vspace{-10pt}
\begin{flalign}
&\quad \text{\bf Algorithm FPGM (FISTA)} & \nonumber \\
&\quad \text{Input: } f\in \cF,\; \x_0\in\Reals^d,\; \y_0 = \x_0,\; t_0 = 1. & \nonumber \\
&\quad \text{For } i = 0,\ldots,N-1 & \nonumber \\
&\quad \qquad \x_{i+1} = \pL(\y_i)
        & \nonumber \\
&\quad \qquad t_{i+1} = \frac{1+\sqrt{1+4t_i^2}}{2} & \label{eq:ti} \\
&\quad \qquad \y_{i+1} = \x_{i+1}
                + \frac{t_i - 1}{t_{i+1}}(\x_{i+1} - \x_i) & \nonumber
\end{flalign}
\end{minipage}
} \vspace{5pt}

FPGM has the following bound 
for the cost function~\cite[Thm. 4.4]{beck:09:afi} for any $N\ge1$:
\begin{align}
F(\x_N) - F(\x_*)
	\le \frac{LR^2}{2t_{N-1}^2}
        \le \frac{2LR^2}{(N+1)^2}
\label{eq:fv_fpgm}
,\end{align}
where the 
parameters $t_i$~\eqref{eq:ti} satisfy
\begin{align}
t_i^2 = \sum_{l=0}^it_l \quad\text{and}\quad t_i \ge \frac{i+2}{2} 
\label{eq:ti_rule}
.\end{align}
Sec.~\ref{sec:pep,cost} 
provides a new proof of the cost function bound~\eqref{eq:fv_fpgm} of FPGM
using a new relaxation of PEP,
and illustrates that this particular acceleration of PGM
results from optimizing a relaxed version of
the cost function form of PEP.
In addition, it is shown in~\cite{beck:09:afi,chambolle:15:otc} 
that FPGM and its bound~\eqref{eq:fv_fpgm} 
generalize to any $t_i$ such that 
$t_0 = 1$ and
$t_i^2 \le t_{i-1}^2 + t_i$ 
for all $i\ge1$
with corresponding bound for any $N\ge1$:
\begin{align}
F(\x_N) - F(\x_*)
        \le \frac{LR^2}{2t_{N-1}^2}
,\end{align}
which includes the choice $t_i = \frac{i+a}{a}$ for any $a\ge2$.
Using our relaxed PEP, Sec.~\ref{sec:pep,cost} 
further describes similar but different generalizations of FPGM
that complement our understanding of FPGM.

We are often interested 
in the worst-case analysis of the norm of the (composite) gradient (mapping)
in addition to that of the cost function,
particularly when dealing with dual problems.
To improve the rate $O(1/N)$ of the gradient norm bound of a gradient method,
Nesterov~\cite{nesterov:12:htm} suggested
performing his fast gradient method (FGM)~\cite{nesterov:83:amf,nesterov:04},
a non-proximal version of FPGM, 
for the first $m$ iterations 
and a gradient method for remaining $N-m$ iterations
for smooth convex problems (when $\phi(\x) = 0$).
Here we extend this idea to the nonsmooth composite convex problem~\eqref{eq:prob}
and use FPGM-\m 
to denote the resulting algorithm.

\fbox{
\begin{minipage}[t]{0.85\linewidth}
\vspace{-10pt}
\begin{flalign}
&\quad \text{\bf Algorithm FPGM-\m} & \nonumber \\
&\quad \text{Input: } f\in \cF,\; \x_0\in\Reals^d,\; \y_0 = \x_0,\; t_0 = 1. & \nonumber \\
&\quad \text{For } i = 0,\ldots,N-1 & \nonumber \\
&\quad \qquad \x_{i+1} = \pL(\y_i)
        & \nonumber \\
&\quad \qquad t_{i+1} = \frac{1+\sqrt{1+4t_i^2}}{2},
                \quad i \le m-1
		& \nonumber \\
&\quad \qquad \y_{i+1} = \begin{cases}
                \x_{i+1} + \frac{t_i - 1}{t_{i+1}}(\x_{i+1} - \x_i),
                & i \le m-1, \\
                \x_{i+1}, & \text{otherwise.}
                \end{cases}
                & \nonumber
\end{flalign}
\end{minipage}
} \vspace{5pt}

\noindent
The following theorem provides a $O(1/N^{\frac{3}{2}})$ 
worst-case bound for the norm of~\cgm of FPGM-\m,
using the idea in~\cite{nesterov:12:htm}
and Lemma~\ref{lem:pgmono}.

\begin{theorem}
Let $F\;:\;\Reals^d\rightarrow\Reals$ be in \cF
and let $\x_0,\cdots,\x_N \in \Reals^d$ be generated by
FPGM-\m for $1\le m\le N$. Then for $N\ge1$,
\begin{align}
\min_{i\in\{0,\ldots,N\}}||\tnabla F(\x_i)|| 
	\le ||\tnabla F(\x_N)||
        \le \frac{2LR}{(m+1)\sqrt{N-m+1}}
\label{eq:pg_fpgmh}
.\end{align}
\end{theorem}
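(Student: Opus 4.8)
The plan is to mirror the proof of the PGM \cgm bound~\eqref{eq:pg_pgm}, but to replace the slow PGM cost bound at the switch point by the fast FPGM cost bound~\eqref{eq:fv_fpgm}, and then to convert the resulting faster cost decrease into a decrease of the \cgm norm over the trailing proximal-gradient iterations. I would split the $N$ iterations at the index $m$. Since the first $m$ iterations of FPGM-\m coincide with FPGM, the bound~\eqref{eq:fv_fpgm} holds at $\x_m$; together with $t_{m-1}\ge\frac{m+1}{2}$ from~\eqref{eq:ti_rule} this yields
\begin{align*}
F(\x_m) - F(\x_*) \;\le\; \frac{LR^2}{2t_{m-1}^2} \;\le\; \frac{2LR^2}{(m+1)^2},
\end{align*}
which supplies the $(m+1)^{-2}$ factor appearing in~\eqref{eq:pg_fpgmh}.

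For the trailing block, I would use that for $i\ge m$ the updates are plain proximal-gradient steps $\x_{i+1}=\pL(\x_i)$, each obeying the monotone descent~\eqref{eq:pg_decr}. Introducing one auxiliary step $\x_{N+1}=\pL(\x_N)$ for the analysis, telescoping~\eqref{eq:pg_decr} over $i=m,\ldots,N$, and dropping the nonnegative quantity $F(\x_{N+1})-F(\x_*)$ gives
\begin{align*}
F(\x_m) - F(\x_*) \;\ge\; \frac{1}{2L}\sum_{i=m}^{N}\norm{\tnabla F(\x_i)}^2.
\end{align*}
Lemma~\ref{lem:pgmono} then shows the \cgm norm is nonincreasing along these steps, so $\norm{\tnabla F(\x_i)}\ge\norm{\tnabla F(\x_N)}$ for each $i$ and the sum is at least $(N-m+1)\norm{\tnabla F(\x_N)}^2$. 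Chaining the two displays gives $\frac{2LR^2}{(m+1)^2}\ge\frac{N-m+1}{2L}\norm{\tnabla F(\x_N)}^2$, which rearranges to the bound in~\eqref{eq:pg_fpgmh}; the first inequality there is immediate because $N$ is one of the indices in the minimum.

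The step I expect to need the most care is the hand-off at iteration $m$: both~\eqref{eq:pg_decr} and Lemma~\ref{lem:pgmono} apply only to genuine proximal-gradient steps of the form $\pL(\x_i)$, so I must make sure the trailing block of exactly $N-m+1$ descent terms begins cleanly at $\x_m$, i.e.\ that the chain $\norm{\tnabla F(\x_m)}\ge\cdots\ge\norm{\tnabla F(\x_N)}$ is unbroken at the momentum-to-PGM switch. Counting these terms correctly is precisely what produces the $(N-m+1)$ factor (rather than $N-m$) and hence the stated constant. The remaining ingredients---the boundary cases $m=N$ (pure FPGM, a single descent term) and $m=1$, and the elementary estimate $t_{m-1}\ge\frac{m+1}{2}$---are routine once the switch is handled.
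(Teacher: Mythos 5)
Your argument is exactly the paper's proof: it chains \eqref{eq:fv_fpgm} at $\x_m$ (via $t_{m-1}\ge\frac{m+1}{2}$ from \eqref{eq:ti_rule}) with \eqref{eq:pg_decr} telescoped over $i=m,\ldots,N$ using the auxiliary point $\x_{N+1}=\pL(\x_N)$, then applies the monotonicity of Lemma~\ref{lem:pgmono} to obtain $\frac{2LR^2}{(m+1)^2}\ge\frac{N-m+1}{2L}\,\|\tnabla F(\x_N)\|^2$, which rearranges to \eqref{eq:pg_fpgmh}. The hand-off subtlety you flag at the momentum-to-PGM switch is handled identically (and only implicitly) in the paper, which likewise reads the algorithm so that $\x_{i+1}=\pL(\x_i)$ for every $i\ge m$, giving the unbroken chain of $N-m+1$ descent terms.
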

\begin{proof}
We have
\begin{align*}
\frac{2LR^2}{(m+1)^2}
        &\stackrel{\eqref{eq:fv_fpgm}}{\ge}
		F(\x_m) - F(\x_*)
        \stackrel{\eqref{eq:pg_decr}}{\ge} 
		F(\x_{N+1}) - F(\x_*) + \frac{1}{2L}\sum_{i=m}^N||\tnabla F(\x_i)||^2 \\
	&\stackrel{\eqref{eq:pg_desc}}{\ge} 
		\frac{N-m+1}{2L}||\tnabla F(\x_N)||^2
,\end{align*}
which is equivalent to~\eqref{eq:pg_fpgmh}.
\end{proof}

\noindent
As noticed by a reviewer,
when $\m = \fNm$,
the worst-case bound~\eqref{eq:pg_fpgmh}
of the~\cgm roughly has its smallest constant $3\sqrt{3}$
for the rate $O(1/N^{\frac{3}{2}})$,
which is better than the choice $\m = \fNh$
in~\cite{nesterov:12:htm}.

Monteiro and Svaiter~\cite{monteiro:13:aah}
considered a variant of FPGM
that replaces $\pL(\cdot)$ of FPGM by $\pLsig(\cdot)$ for $0<\sigma<1$;
that variant, which we denote FPGM-$\sigma$,
satisfies the $O(1/N^{\frac{3}{2}})$ rate for the~\cgm.
This FPGM-$\sigma$ algorithm satisfies
the following cost function and~\cgm worst-case bounds\footnote{
The bound for $\min_{i\in\{0,\ldots,N\}} ||\tnablasig F(\y_i)||$
of FPGM-$\sigma$
is described in a big-O sense in
\cite[Prop. 5.2(c)]{monteiro:13:aah},
and we further computed the constant
in~\eqref{eq:fpgmsig}
by following the derivation of~\cite[Prop. 5.2(c)]{monteiro:13:aah}.
}
\cite[Prop. 5.2]{monteiro:13:aah}
for $N\ge1$:
\begin{align}
F(\x_N) - F(\x_*) &\le \frac{2LR^2}{\sigma^2N^2}, \\
\min_{i\in\{0,\ldots,N\}} ||\tnablasig F(\y_i)|| 
	&\le \frac{2\sqrt{3}}{\sigma}\sqrt{\frac{1+\sigma}{1-\sigma}}
	\frac{LR}{N^{\frac{3}{2}}}
\label{eq:fpgmsig}
.\end{align}
The worst-case bound~\eqref{eq:fpgmsig} of the~\cgm
has its smallest constant
$\frac{2\sqrt{3}}{\sigma^2}\sqrt{\frac{1+\sigma}{1-\sigma}} \approx 16.2$ 
when $\sigma = \frac{\sqrt{17} - 1}{4} \approx 0.78$,
which makes the bound~\eqref{eq:fpgmsig} about
$\frac{16}{3\sqrt{3}} \approx 3$-times
larger 
than the bound~\eqref{eq:pg_fpgmh} of FPGM-$\paren{\m\!=\!\fNm}$ at best.
However, since FPGM-$\sigma$ does not require one to select
the number of total iterations $N$ in advance unlike FPGM-\m,
the FPGM-$\sigma$ algorithm could be useful in practice,
as discussed further in Sec.~\ref{sec:decr}.
Ghadimi and Lan~\cite{ghadimi:16:agm} 
also showed the $O(1/N^{\frac{3}{2}})$ rate 
for a~\cgm worst-case bound of another variant of FPGM,
but the corresponding algorithm in~\cite{ghadimi:16:agm} requires 
two proximal gradient updates per iteration,
combining the proximal operations and the gradient steps 
in a way that differs from the class~\FO
and could be less attractive in terms of
the per-iteration computational complexity.

FPGM has been used in dual problems
\cite{beck:14:aog,beck:09:fgb,beck:14:afd,goldstein:14:fad};
using FPGM-\m and the algorithms in~\cite{ghadimi:16:agm,monteiro:13:aah}
that guarantee $O(1/N^{\frac{3}{2}})$ rate
for minimizing the norm of the~\cgm
could be potentially useful for solving dual problems.
(Using F(P)GM-\m for (dual) smooth convex problems
was discussed in~\cite{devolder:12:dst,necoara:16:ica,nesterov:12:htm}.)
However, FPGM-\m and the algorithms in~\cite{ghadimi:16:agm,monteiro:13:aah} 
are not necessarily the best possible methods
with respect to the worst-case bound of the norm of the~\cgm.
Therefore, Sec.~\ref{sec:pep,spgrad} seeks 
to optimize the step coefficients of~\FO
for minimizing the norm of the~\cgm
using a relaxed PEP.

The next section first provides 
a new proof of FPGM
using our new relaxation on PEP,
and proposes the new generalized FPGM. 

\section{Relaxation and optimization of the cost function form of PEP}
\label{sec:pep,cost}

\subsection{Relaxation for the cost function form of PEP}
\label{sec:pep,cost1}

For~\FO with given step-size coefficients $\bmh := \{h_{i+1,k}\}$,
in principle the worst-case bound on the cost function
after $N$ iterations
corresponds to the solution of the following PEP problem
\cite{drori:14:pof}:
\begin{align}
\mathcal{B}_{\mathrm{P}}(\bmh,N,d,L,R) :=\;
& \max_{\substack{F\in\cF, \\
	\x_0,\cdots,\x_N\in\Reals^d,\; \x_*\in X_*(F) \\
	\y_0,\cdots,\y_{N-1}\in\Reals^d}}
	F(\x_N) - F(\x_*)
        \label{eq:PEP} \tag{P} \\
        &\st \; \x_{i+1} = \pL(\y_i), \quad i=0,\ldots,N-1, 
		\quad ||\x_0 - \x_*|| \le R, \nonumber \\
	&\quad\;\;\;	\y_{i+1} = \y_i + \sum_{k=0}^i \hkip (\x_{k+1} - \y_k),
                \quad i=0,\ldots,N-2. \nonumber
\end{align}
Since (non-relaxed) PEP problems like~\eqref{eq:PEP} are difficult to solve
due to the (infinite-dimensional) functional constraint on $F$,
Drori and Teboulle~\cite{drori:14:pof} suggested 
(for smooth convex problems)
replacing the functional constraint by
a property of $F$ 
related to the update
such as $\pL(\cdot)$ in~\eqref{eq:PEP}.
Taylor~\etal~\cite{taylor:17:ewc,taylor:17:ssc}
discussed properties of $F$
that can replace the functional constraint of PEP
without strictly relaxing~\eqref{eq:PEP},
and provided tight numerical worst-case analysis
for any given $N$.
However, analytical solutions remain 
unknown for~\eqref{eq:PEP} and most PEP problems.

Instead, this paper proposes an alternate relaxation
that is looser than that in~\cite{taylor:17:ewc,taylor:17:ssc} 
but provides tractable and useful analytical results.
We consider the following property of $F$ involving
the proximal gradient update $\pL(\cdot)$~\cite[Lemma 2.3]{beck:09:afi}:
\begin{align}
F(\x) - F(\pL(\y)) \ge \frac{L}{2}||\pL(\y) - \y||^2
	+ L\Inprod{\y - \x}{\pL(\y) - \y}
\quad \forall \x,\y\in\Reals^d
\label{eq:ineq0} 
\end{align}
to replace the functional constraint on $F$.
In particular, we use the following property:
\begin{multline}
\frac{L}{2}||\pL(\y) - \y||^2 - L\Inprod{\pL(\x) - \x}{\pL(\y) - \y} \\
\le F(\pL(\x)) - F(\pL(\y)) + L\Inprod{\pL(\y) - \y}{\x - \y},
\quad \forall \x,\y\in\Reals^d
\label{eq:ineq}
\end{multline}
that results from replacing $\x$ in~\eqref{eq:ineq0} by $\pL(\x)$.
When $\phi(\x) = 0$,
the property~\eqref{eq:ineq} reduces to
\begin{multline}
\frac{1}{2L}||\nabla f(\y)||^2 - \frac{1}{L}\Inprod{\nabla f(\x)}{\nabla f(\y)} \\
\le f\paren{\x - \frac{1}{L}\nabla f(\x)} 
	- f\paren{\y - \frac{1}{L}\nabla f(\y)}
	- \Inprod{\nabla f(\y)}{\x - \y},
\quad \forall \x,\y\in\Reals^d
\label{eq:ineqq}
.\end{multline}
Note that the relaxation of PEP in
\cite{drori:14:pof,kim:16:gto,kim:16:ofo,kim:17:otc,taylor:17:ssc}
for unconstrained smooth convex minimization ($\phi(\x) = 0$)
uses a well-known property of $f$ in~\cite[Thm. 2.1.5]{nesterov:04}
that differs from~\eqref{eq:ineqq}
and does not strictly relax the PEP as discussed in~\cite{taylor:17:ssc},
whereas our relaxation using
\eqref{eq:ineq} and~\eqref{eq:ineqq}
does not guarantee a tight relaxation of~\eqref{eq:PEP}.
Finding a tight relaxation 
that leads to useful (or even optimal)
algorithms remains an open problem 
for nonsmooth composite convex problems.

Similar to~\cite[Problem (Q$'$)]{drori:14:pof},
we (strictly) relax problem~\eqref{eq:PEP} as follows using
a set of constraint inequalities~\eqref{eq:ineq}
at the points $(\x,\y)=(\y_{i-1},\y_i)$ for $i=1,\ldots,N-1$
and $(\x,\y)=(\x_*,\y_i)$ for $i=0,\ldots,N-1$:
\begin{align}
\mathcal{B}_{\mathrm{P1}}(\bmh,N,d,L,R) :=\;
&\max_{\substack{\G\in\Reals^{N\times d}, \\ \bmdel\in\Reals^N}}
        LR^2\delta_{N-1}
        \nonumber \\
        &\st \; \Tr{\G^\top\A_{i-1,i}(\bmh)\G} \le \delta_{i-1} - \delta_i,
                \quad i=1,\ldots,N-1, 
		\label{eq:pPEP} \tag{P1} \\
	&\quad\;\;\; \Tr{\G^\top\D_i(\bmh)\G + \bmnu \bmu_i^\top\G} \le -\delta_i,
                \quad i=0,\ldots,N-1, \nonumber
\end{align}
for any given unit vector $\bmnu\in\Reals^d$,
by defining
the $(i+1)$th standard basis vector $\bmu_i = \bme_{i+1} \in \Reals^N$,
the matrix $\G = [\g_0,\cdots,\g_{N-1}]^\top \in \Reals^{N\times d}$
and the vector $\bmdel = [\delta_0,\cdots,\delta_{N-1}]^\top \in \Reals^N$,
where 
\begin{align}
\begin{cases}
\g_i := - \frac{1}{||\y_0 - \x_*||}(\pL(\y_i) - \y_i) 
	= \frac{1}{L||\y_0 - \x_*||}\tnabla F(\y_i), & \\
\delta_i := \frac{1}{L||\y_0 - \x_*||^2}(F(\pL(\y_i)) - F(\x_*)), &
\end{cases}
\end{align}
for $i=0,\ldots,N-1,*$. Note that $\g_* = [0,\cdots,0]^\top$, $\delta_* = 0$
and $\Tr{\G^\top\bmu_i\bmu_j^\top\G} = \inprod{\g_i}{\g_j}$ by definition.
The matrices $\A_{i-1,i}(\bmh)$ and $\D_i(\bmh)$ are defined as
\begin{align}
\begin{cases}
\A_{i-1,i}(\bmh) :=
        \frac{1}{2}\bmu_i\bmu_i^\top
        - \frac{1}{2}\bmu_{i-1}\bmu_i^\top
        - \frac{1}{2}\bmu_i\bmu_{i-1}^\top
        + \frac{1}{2}\sum_{k=0}^{i-1}
        \hki (\bmu_i\bmu_k^\top + \bmu_k\bmu_i^\top), &\!\!\!\!\! \\
\D_i(\bmh) := \frac{1}{2}\bmu_i\bmu_i^\top + \frac{1}{2}\sum_{j=1}^i\sum_{k=
0}^{j-1}
                \hkj(\bmu_i\bmu_k^\top + \bmu_k\bmu_i^\top), &\!\!\!\!\!
	\end{cases}
\label{eq:ABCDF}
\end{align}
which results from the inequalities~\eqref{eq:ineq}
at the points $(\x,\y) = (\y_{i-1},\y_i)$ and $(\x,\y) = (\x_*,\y_i)$ respectively.

As in~\cite[Problem (DQ$'$)]{drori:14:pof},
problem~\eqref{eq:pPEP} 
has a dual formulation that one can solve numerically
for any given $N$ 
using a semidefinite program (SDP)
to determine an upper bound on the cost function worst-case bound
for any~\FO:\footnote{
\label{ftdualform}
See Appendix~\ref{appen1} for the derivation of the dual formulation~\eqref{eq:D}
of~\eqref{eq:pPEP}.
}
\begin{align}
&F(\x_N) - F(\x_*) \le
\mathcal{B}_{\mathrm{P}}(\bmh,N,d,L,R) \nonumber\\
\le\; &\mathcal{B}_{\mathrm{D}}(\bmh,N,L,R) :=\;
\min_{\substack{(\bmlam,\bmtau)\in\Lambda, \\ \gamma\in\Reals}}
        \left\{
	\frac{1}{2}LR^2\gamma\;:\;
	\left(\begin{array}{cc}
		\bmS(\bmh,\bmlam,\bmtau) & \frac{1}{2}\bmtau \\
		\frac{1}{2}\bmtau^\top & \frac{1}{2}\gamma
	\end{array}\right)
	\succeq 0
	\right\}
	\label{eq:D} \tag{D} 
,\end{align}
where $\bmlam = [\lambda_1,\cdots,\lambda_{N-1}]^\top \in \Reals_+^{N-1}$,
$\bmtau = [\tau_0,\cdots,\tau_{N-1}]^\top \in \Reals_+^N$,
and
\begin{align}
&\Lambda := \left\{(\bmlam,\bmtau)\in\Reals_+^{2N-1}
		\;:\;
		\begin{array}{l}
		\tau_0 = \lambda_1,\;\; \lambda_{N-1} + \tau_{N-1} = 1, \\
		\lambda_i - \lambda_{i+1} + \tau_i = 0, \; i=1,\ldots,N-2,
		\end{array}
		\right\},
	\label{eq:Lam} \\
&\bmS(\bmh,\bmlam,\bmtau) := \sum_{i=1}^{N-1} \lambda_i\A_{i-1,i}(\bmh)
                + \sum_{i=0}^{N-1}\tau_i\D_i(\bmh)
	\label{eq:S}
.\end{align}
This means that one can compute 
a valid upper bound~\eqref{eq:D} of~\eqref{eq:PEP}
for given step coefficients \bmh using a SDP.
The next two sections 
provide an analytical solution to~\eqref{eq:D}
for FPGM and similarly for our new generalized FPGM,
superseding the use of numerical SDP solvers.

\subsection{Generalized FPGM}
\label{sec:gen,fpm}

We specify a feasible point of~\eqref{eq:D}
that leads to our new generalized form of FPGM.

\begin{lemma}
\label{lem:feas}
For the following step coefficients:
\begin{align}
h_{i+1,k}
        &= \begin{cases}
                \frac{t_{i+1}}{T_{i+1}}
                        \left(t_k - \sum_{j=k+1}^i h_{j,k}\right), & k=0,\ldots,i-1, \\
                1 + \frac{(t_i - 1)t_{i+1}}{T_{i+1}}, & k=i,
        \end{cases} 
	\label{eq:h_gen_fpgm}
\end{align}
the choice of variables:
\begin{align}
\lambda_i &= \frac{T_{i-1}}{T_{N-1}}, \quad\! i=1,\ldots,N-1, \quad
\tau_i = \frac{t_i}{T_{N-1}}, \quad\! i=0,\ldots,N-1, \quad
\gamma = \frac{1}{T_{N-1}},
	\label{eq:par_gen_fpgm}
\end{align}
is a feasible point of~\eqref{eq:D}
for any choice of $t_i$ such that 
\begin{align}
t_0 = 1, \quad
t_i > 0, 
\quad\text{and}\quad t_i^2 \le T_i := \sum_{l=0}^i t_l
\label{eq:cond_gen_fpgm}
.\end{align}
\end{lemma}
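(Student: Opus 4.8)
The plan is to verify directly that the proposed triple $(\bmlam,\bmtau,\gamma)$ lies in the feasible set of~\eqref{eq:D}, which means checking three things: nonnegativity $\bmlam,\bmtau\ge0$, the affine constraints defining $\Lambda$ in~\eqref{eq:Lam}, and the semidefinite constraint. The first two are immediate. Since $t_i>0$, each $T_i=\sum_{l=0}^i t_l>0$, so $\lambda_i=T_{i-1}/T_{N-1}$ and $\tau_i=t_i/T_{N-1}$ are positive. For the affine constraints I would substitute and use $T_0=t_0=1$ together with the telescoping identity $T_i=T_{i-1}+t_i$: for instance $\tau_0=t_0/T_{N-1}=T_0/T_{N-1}=\lambda_1$, then $\lambda_{N-1}+\tau_{N-1}=(T_{N-2}+t_{N-1})/T_{N-1}=1$, and $\lambda_i-\lambda_{i+1}+\tau_i=(T_{i-1}-T_i+t_i)/T_{N-1}=0$.

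The substance lies in the semidefinite constraint. Because $\gamma=1/T_{N-1}>0$, I would invoke the Schur complement: the $2\times2$ block matrix in~\eqref{eq:D} is positive semidefinite iff $\bmS(\bmh,\bmlam,\bmtau)-\frac{1}{2\gamma}\bmtau\bmtau^\top\succeq0$, where $\frac{1}{2\gamma}\bmtau\bmtau^\top$ has $(i,j)$ entry $\frac{1}{2}T_{N-1}\tau_i\tau_j=t_it_j/(2T_{N-1})$. My claim—and the reason the coefficients~\eqref{eq:h_gen_fpgm} are shaped as they are—is that this Schur complement is exactly a diagonal matrix with nonnegative entries, so positive semidefiniteness becomes automatic.

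To establish this I would compute $\bmS$ entrywise from~\eqref{eq:ABCDF} and~\eqref{eq:S}. On the diagonal, only $\A_{i-1,i}$ and $\D_i$ contribute a $\frac{1}{2}\bmu_i\bmu_i^\top$ term, so the $(i,i)$ entry of $\bmS$ is $\frac{1}{2}(\lambda_i+\tau_i)=T_i/(2T_{N-1})$ for $i\ge1$ and $\frac{1}{2}\tau_0=1/(2T_{N-1})$ for $i=0$; subtracting $t_i^2/(2T_{N-1})$ leaves $(T_i-t_i^2)/(2T_{N-1})\ge0$ by the hypothesis $t_i^2\le T_i$ in~\eqref{eq:cond_gen_fpgm} (and $0$ at $i=0$). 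For an off-diagonal entry $(i,j)$ with $j<i$, a quick bookkeeping shows that only $\lambda_i\A_{i-1,i}$ (through $-\frac12\bmu_{i-1}\bmu_i^\top$ and $\frac12 h_{i,j}$) and $\tau_i\D_i$ (through $\frac12\sum_{j'=j+1}^i h_{j',j}$) touch it. Writing $P_{i,j}:=\sum_{j'=j+1}^i h_{j',j}$, the entry equals $\frac{1}{2T_{N-1}}(T_{i-1}h_{i,j}+t_iP_{i,j})$ when $j<i-1$ and $\frac{1}{2T_{N-1}}(T_ih_{i,i-1}-T_{i-1})$ when $j=i-1$.

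The crux, which I expect to be the main obstacle, is then showing that definition~\eqref{eq:h_gen_fpgm} forces both expressions to equal $t_it_j/(2T_{N-1})$, so that the off-diagonal part of $\bmS$ cancels exactly against $\frac{1}{2\gamma}\bmtau\bmtau^\top$. For $j=i-1$ this reduces to $h_{i,i-1}=(T_{i-1}+t_{i-1}t_i)/T_i$, which is precisely the $k=i$ branch of~\eqref{eq:h_gen_fpgm} rewritten via $T_i-t_i=T_{i-1}$. For $j<i-1$, the $k<i$ branch gives $h_{i,j}=\frac{t_i}{T_i}(t_j-P_{i-1,j})$; feeding this into $P_{i,j}=P_{i-1,j}+h_{i,j}$ yields $t_iP_{i,j}=t_it_j-T_{i-1}h_{i,j}$, i.e. $T_{i-1}h_{i,j}+t_iP_{i,j}=t_it_j$ exactly. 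This step is not conceptually deep but requires careful tracking of the triangular partial sums $P_{i,j}$ and of which matrices reach each entry. Once it is in place, the Schur complement is diagonal with entries $(T_i-t_i^2)/(2T_{N-1})\ge0$, hence positive semidefinite, so the block matrix is positive semidefinite and the stated point is feasible for~\eqref{eq:D}.
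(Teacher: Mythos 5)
Your proof is correct and takes essentially the same route as the paper: check membership in $\Lambda$, compute the entries of $\bmS(\bmh,\bmlam,\bmtau)$, and show that inserting~\eqref{eq:h_gen_fpgm} collapses them to $\frac{t_it_k}{2T_{N-1}}$ off the diagonal and $\frac{T_i}{2T_{N-1}}$ on it (your identity $T_{i-1}h_{i,j}+t_iP_{i,j}=t_it_j$ is the paper's cancellation regrouped). The only cosmetic difference is the last step, where you take a Schur complement to reduce to $\frac{1}{2T_{N-1}}\diag{T_i-t_i^2}\succeq 0$, whereas the paper writes the full block matrix directly as $\frac{1}{2T_{N-1}}\paren{\diag{\TT-\tt^2}+\tt\tt^\top}\succeq 0$ --- the same diagonal-plus-rank-one structure in disguise.
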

\begin{proof}
It is obvious that $(\bmlam,\bmtau)$ in~\eqref{eq:par_gen_fpgm}
with~\eqref{eq:cond_gen_fpgm}
is in $\Lambda$~\eqref{eq:Lam}.
Using~\eqref{eq:ABCDF},
the $(i,k)$th entry of the symmetric matrix $\bmS(\bmh,\bmlam,\bmtau)$ in~\eqref{eq:S}
can be written as 
\begin{align*}
&S_{i,k}(\bmh,\bmlam,\bmtau) \\
= &\begin{cases}
	\frac{1}{2}\big((\lambda_i+\tau_i) h_{i,k} 
		+ \tau_i\sum_{j=k+1}^{i-1} h_{j,k}\big),
		& i=2,\ldots,N-1,\;k=0,\ldots,i-2, \\
	\frac{1}{2}\paren{(\lambda_i+\tau_i) h_{i,i-1} - \lambda_i},
		& i=1,\ldots,N-1,\;k=i-1, \\
	\frac{1}{2}\lambda_{i+1},
		& i=0,\ldots,N-2,\;k=i, \\
	\frac{1}{2},
		& i=N-1,\;k=i,
	\end{cases}
\end{align*}
where each element $S_{i,k}(\bmh,\bmlam,\bmtau)$
corresponds to the coefficient of the term $\bmu_i\bmu_k^\top$
of $\bmS(\bmh,\bmlam,\bmtau)$ in~\eqref{eq:S}.
Then, inserting~\eqref{eq:h_gen_fpgm} and~\eqref{eq:par_gen_fpgm} 
to the above yields
\begin{align*}
&S_{i,k}(\bmh,\bmlam,\bmtau) \\
= &\begin{cases}
        \frac{1}{2}\bigg(\frac{T_i}{T_{N-1}}\frac{t_i}{T_i}
		\paren{t_k - \sum_{j=k+1}^{i-1}h_{j,k}} 
		+ \frac{t_i}{T_{N-1}} &\!\!\!\!\! \sum_{j=k+1}^{i-1}h_{j,k}\bigg), \\
                & i=2,\ldots,N-1,\;k = 0,\ldots,i-2, \\
        \frac{1}{2}\paren{\frac{T_i}{T_{N-1}}\paren{1 + \frac{(t_{i-1} - 1)t_i}{T_i}} 
                - \frac{T_{i-1}}{T_{N-1}}},
                & i=1,\ldots,N-1,\;k=i-1, \\
        \frac{T_i}{2T_{N-1}},
                & i=0,\ldots,N-1,\;k=i.
        \end{cases} \\
= &\begin{cases}
        \frac{t_i t_k}{2T_{N-1}},        
		& i=1,\ldots,N-1,\;k=0,\ldots,i-1, \\
        \frac{T_i}{2T_{N-1}},
                & i=0,\ldots,N-1,\;k=i.
	\end{cases}
\end{align*}
Then, using~\eqref{eq:par_gen_fpgm} and~\eqref{eq:cond_gen_fpgm}, 
we finally show the feasibility condition of~\eqref{eq:D}:
\begin{align*}
\left(\begin{array}{cc}
                \bmS(\bmh,\bmlam,\bmtau) & \frac{1}{2}\bmtau \\
                \frac{1}{2}\bmtau^\top & \frac{1}{2}\gamma
        \end{array}\right)
        = \frac{1}{2T_{N-1}}\paren{\diag{\TT - \tt^2} + \tt\tt^\top}
        \succeq 0
,\end{align*}
where
$\tt = \paren{t_0,\cdots,t_{N-1},1}^\top$
and
$\TT = \paren{T_0,\cdots,T_{N-1},1}^\top$.
\end{proof}

\FO with the step coefficients~\eqref{eq:h_gen_fpgm}
would be both computationally and memory\hyp wise inefficient,
so we next present 
an equivalent recursive form of~\FO with \eqref{eq:h_gen_fpgm},
named Generalized FPGM (GFPGM).

\fbox{
\begin{minipage}[t]{0.86\linewidth}
\vspace{-10pt}
\begin{flalign}
&\quad \text{\bf Algorithm GFPGM} \nonumber \\
&\quad \text{Input: } f\in \cF,\; \x_0\in\Reals^d,\; 
	\y_0 = \x_0,\; t_0 = T_0 = 1. \nonumber \\
&\quad \text{For } i = 0,\ldots,N-1 \nonumber \\
&\quad \qquad \x_{i+1} = \pL(\y_i)
        \nonumber \\
&\quad \qquad \text{Choose } t_{i+1} \text{ s.t. } t_{i+1} > 0 
	\text{ and }t_{i+1}^2 \le T_{i+1} := \sum_{l=0}^{i+1}t_l \nonumber \\
&\quad \qquad \y_{i+1} = \x_{i+1}
                + \frac{(T_i - t_i)t_{i+1}}{t_iT_{i+1}}(\x_{i+1} - \x_i)
                + \frac{(t_i^2 - T_i)t_{i+1}}{t_iT_{i+1}}(\x_{i+1} - \y_i)
		\nonumber
\end{flalign}
\end{minipage}
} \vspace{5pt}

\begin{proposition}
\label{prop:fpgm}
The sequence $\{\x_0,\cdots,\x_N\}$ generated
by~\FO with step sizes
\eqref{eq:h_gen_fpgm} is identical
to the corresponding sequence generated by GFPGM.
\end{proposition}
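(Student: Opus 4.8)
The plan is to exploit that GFPGM and the member of \FO defined by the step coefficients~\eqref{eq:h_gen_fpgm} use the \emph{same} proximal gradient step $\x_{i+1} = \pL(\y_i)$; hence an induction on $i$ reduces the claim to showing that the two prescriptions for $\y_{i+1}$ coincide whenever the histories $\{\x_0,\ldots,\x_{i+1}\}$ and $\{\y_0,\ldots,\y_i\}$ already agree. To bridge the ``long-form'' \FO update and the recursive GFPGM update, I would introduce the auxiliary sequence $\z_i := \x_0 + \sum_{k=0}^{i-1} t_k(\x_{k+1} - \y_k)$, equivalently $\z_0 = \x_0$ and $\z_{i+1} = \z_i + t_i(\x_{i+1}-\y_i)$, which accumulates exactly the weighted proximal steps $\x_{k+1}-\y_k = -\frac{1}{L}\tnabla F(\y_k)$ hidden inside the \FO update.

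The first main step is to simplify the \FO update $\y_{i+1} = \y_i + \sum_{k=0}^{i} h_{i+1,k}(\x_{k+1}-\y_k)$ under~\eqref{eq:h_gen_fpgm}. After peeling off the $k=i$ term (using $h_{i+1,i} = 1 + (t_i-1)t_{i+1}/T_{i+1}$) and inserting the first branch of~\eqref{eq:h_gen_fpgm} into the remaining sum, I would swap the order of summation in the double sum $\sum_{k=0}^{i-1}\sum_{j=k+1}^{i} h_{j,k}(\x_{k+1}-\y_k) = \sum_{j=1}^{i}\sum_{k=0}^{j-1} h_{j,k}(\x_{k+1}-\y_k)$ and recognize each inner sum as $\y_j - \y_{j-1}$, so that the double sum telescopes to $\y_i - \y_0$. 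Together with $\sum_{k=0}^{i-1} t_k(\x_{k+1}-\y_k) = \z_i - \y_0$, this collapses the \FO update to the compact form
\[ \y_{i+1} = \x_{i+1} + \frac{(t_i-1)t_{i+1}}{T_{i+1}}(\x_{i+1}-\y_i) + \frac{t_{i+1}}{T_{i+1}}(\z_i - \y_i), \]
an identity that holds for every $i \ge 0$ purely by virtue of the \FO structure and~\eqref{eq:h_gen_fpgm}.

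It then remains to eliminate $\z_i$ in favor of $\x_i$, which is the crux of the argument. The key identity is $\z_i - \y_i = \frac{T_i - t_i}{t_i}(\y_i - \x_i)$, which I would establish by induction on $i$: the base case $i=0$ is immediate since $\z_0 = \y_0 = \x_0$ and $T_0 = t_0 = 1$, and for the inductive step I would combine $\z_{i+1} = \z_i + t_i(\x_{i+1}-\y_i)$ with the compact form to write $\z_{i+1}-\y_{i+1} = (\z_i - \y_i) + (t_i-1)(\x_{i+1}-\y_i) - (\y_{i+1}-\x_{i+1})$ and then recognize its first two terms as $\frac{T_{i+1}}{t_{i+1}}(\y_{i+1}-\x_{i+1})$ via that same compact form, yielding $\z_{i+1}-\y_{i+1} = \frac{T_{i+1}-t_{i+1}}{t_{i+1}}(\y_{i+1}-\x_{i+1})$. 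Substituting this identity into the compact form and expanding $\y_i - \x_i = (\x_{i+1}-\x_i) - (\x_{i+1}-\y_i)$ then reproduces the GFPGM update coefficient-for-coefficient, closing the induction. I expect the main obstacle to be the organization of this bookkeeping rather than any single hard estimate: the summation swap and telescoping in the first step, and the self-referential derivation of the $\z_i$ identity (whose inductive step legitimately reuses the compact form), carry all the content, while the final matching of the two fractional coefficients is routine algebra.
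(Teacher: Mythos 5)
Your proof is correct, but it takes a route that differs from the paper's Appendix~\ref{appen2}. The paper argues in two inductions that stay entirely at the level of the step coefficients and one-step recursions: it first shows by induction that the cumulative coefficients~\eqref{eq:h_gen_fpgm} satisfy the recursive form~\eqref{eq:hh_gen_fpm1}, and then matches iterates using only the single-step relation $\y_n - \y_{n-1} = -\frac{1}{L}\sum_{k=0}^{n-1} h_{n,k}\tnabla F(\y_k)$, so no whole-history telescoping is ever needed. You instead work directly with~\eqref{eq:h_gen_fpgm}: swapping the double sum and telescoping it to $\y_i - \y_0$, and introducing $\z_i = \x_0 + \sum_{k=0}^{i-1} t_k(\x_{k+1}-\y_k)$, is precisely the mechanism the paper uses for the \emph{other} equivalence, Prop.~\ref{prop:fpgm_} (GFPGM $\equiv$ GFPGM$'$, Appendix~\ref{appen3}); your ``compact form'' is exactly the GFPGM$'$ update rewritten, and your identity $\z_i - \y_i = \frac{T_i - t_i}{t_i}(\y_i - \x_i)$ (valid since $t_i>0$ by~\eqref{eq:cond_gen_fpgm}) is the reduction from the three-sequence form back to the two-sequence GFPGM form. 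In effect you prove Prop.~\ref{prop:fpgm} by establishing \FO $\equiv$ GFPGM$'$ first and then GFPGM$'$ $\equiv$ GFPGM, i.e., you obtain Props.~\ref{prop:fpgm} and~\ref{prop:fpgm_} in one pass in the reverse order of the paper. I verified the pieces: the telescoping, the base case, the inductive step for the $\z$-identity (which, as you can note, does not actually need the inductive hypothesis --- it follows directly from the compact form at step $i$ together with $\z_{i+1}=\z_i+t_i(\x_{i+1}-\y_i)$, with only $i=0$ checked separately), and the final coefficient match $\frac{t_{i+1}}{T_{i+1}}\bigl(t_i - 1 - \frac{T_i-t_i}{t_i}\bigr) = \frac{(t_i^2-T_i)t_{i+1}}{t_iT_{i+1}}$ all hold. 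What each approach buys: the paper's version avoids any auxiliary sequence and keeps each induction local, at the cost of the somewhat opaque coefficient recursion~\eqref{eq:hh_gen_fpm1}; yours is more conceptual, exposing the accumulated-gradient (momentum) structure underlying~\eqref{eq:h_gen_fpgm} and making the connection to Nesterov's three-sequence scheme of~\cite{nesterov:05:smo} explicit, at the cost of carrying the extra sequence $\z_i$ through the bookkeeping.
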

\begin{proof}
See Appendix~\ref{appen2}.
\end{proof}

Using Lemma~\ref{lem:feas}, the following theorem bounds the cost function
for the GFPGM iterates.

\begin{theorem}
\label{thm:gen_fpgm}
Let $F\;:\;\Reals^d\rightarrow\Reals$ be in \cF
and let $\x_0,\cdots,\x_N \in \Reals^d$ be generated by
GFPGM. Then for $N\ge1$,
\begin{align}
F(\x_N) - F(\x_*)
        \le \frac{LR^2}{2T_{N-1}}
\label{eq:fv_gen_fpgm}
.\end{align}
\end{theorem}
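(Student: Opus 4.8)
The plan is to combine the algorithmic equivalence of Proposition~\ref{prop:fpgm} with the dual feasibility certificate of Lemma~\ref{lem:feas} through weak duality. First I would observe that, by Proposition~\ref{prop:fpgm}, the iterate $\x_N$ produced by GFPGM coincides with the iterate produced by \FO using the step coefficients $\bmh$ defined in~\eqref{eq:h_gen_fpgm}. Consequently, since $F\in\cF$ and the GFPGM iterates satisfy the constraints defining the primal PEP, the gap $F(\x_N)-F(\x_*)$ is at most the primal value $\mathcal{B}_{\mathrm{P}}(\bmh,N,d,L,R)$ associated with these step coefficients.

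Next I would invoke the primal--dual chain of inequalities recorded in~\eqref{eq:D}, which gives $\mathcal{B}_{\mathrm{P}}(\bmh,N,d,L,R) \le \mathcal{B}_{\mathrm{D}}(\bmh,N,L,R)$, the latter being the minimum of $\frac{1}{2}LR^2\gamma$ over dual-feasible triples $(\bmlam,\bmtau,\gamma)$. The step coefficients~\eqref{eq:h_gen_fpgm} arise from parameters $t_i$ that, by the "Choose $t_{i+1}$" rule of GFPGM, satisfy $t_0=1$, $t_i>0$, and $t_i^2 \le T_i$, which is exactly the hypothesis~\eqref{eq:cond_gen_fpgm} of Lemma~\ref{lem:feas}. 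That lemma therefore certifies that the explicit choice~\eqref{eq:par_gen_fpgm}, in particular $\gamma = 1/T_{N-1}$, is dual-feasible. Because~\eqref{eq:D} is a minimization, evaluating its objective at this feasible point furnishes an upper bound $\mathcal{B}_{\mathrm{D}}(\bmh,N,L,R) \le \frac{1}{2}LR^2\cdot\frac{1}{T_{N-1}}$.

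Concatenating these bounds yields $F(\x_N)-F(\x_*) \le \frac{LR^2}{2T_{N-1}}$, which is precisely~\eqref{eq:fv_gen_fpgm}. I do not expect a genuine obstacle here, because the substantive work is already discharged in Lemma~\ref{lem:feas}, whose feasibility check reduces the dual certificate matrix to $\frac{1}{2T_{N-1}}\paren{\diag{\TT-\tt^2}+\tt\tt^\top}$ and uses $t_i^2\le T_i$ to secure positive semidefiniteness; the present theorem is then a direct corollary obtained by assembling Proposition~\ref{prop:fpgm}, the weak-duality chain in~\eqref{eq:D}, and that certificate. The only point meriting explicit mention is that the GFPGM parameter rule indeed matches~\eqref{eq:cond_gen_fpgm}, so that Lemma~\ref{lem:feas} applies verbatim.
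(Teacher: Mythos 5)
Your proposal is correct and takes essentially the same route as the paper, whose one-line proof of Theorem~\ref{thm:gen_fpgm} combines \eqref{eq:D}, Lemma~\ref{lem:feas}, and Prop.~\ref{prop:fpgm} exactly as you assemble them (including the check that the GFPGM rule for $t_{i+1}$ matches~\eqref{eq:cond_gen_fpgm}). If anything, your version is marginally more careful: you bound $\mathcal{B}_{\mathrm{D}}(\bmh,N,L,R) \le \frac{1}{2}LR^2\gamma$ by evaluating the minimization at the feasible point, whereas the paper writes this step as an equality, which is only justified (and only needed) as an inequality.
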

\begin{proof}
Using~\eqref{eq:D}, Lemma~\ref{lem:feas}
and Prop.~\ref{prop:fpgm},
we have
\begin{align}
F(\x_N) - F(\x_*) \le
\mathcal{B}_{\mathrm{D}}(\bmh,N,L,R)
= \frac{1}{2}LR^2\gamma
= \frac{LR^2}{2T_{N-1}}
\label{eq:fvbound_gen_fpgm}
.\end{align}
\end{proof}

\noindent
The GFPGM and Thm.~\ref{thm:gen_fpgm} 
reduce to FPGM and~\eqref{eq:fv_fpgm}
when $t_i^2 = T_i$ for all $i$,
and Sec.~\ref{sec:fista} describes that
FPGM results from optimizing the step coefficients of~\FO
with respect to the cost function form of the relaxed PEP~\eqref{eq:D}.
This GFPGM also includes 
the choice $t_i = \frac{i+a}{a}$
for any $a\ge2$ as used in~\cite{chambolle:15:otc},
which we denote as FPGM-$a$
that differs from the algorithm in~\cite{chambolle:15:otc}.
The following corollary provides 
a cost function worst-case bound for FPGM-$a$.

\begin{corollary}
\label{cor:cost}
Let $F\;:\;\Reals^d\rightarrow\Reals$ be in \cF
and let $\x_0,\cdots,\x_N \in \Reals^d$ be generated by
GFPGM with $t_i = \frac{i+a}{a}$ (FPGM-$a$)
for any $a\ge2$. Then for $N\ge1$,
\begin{align}
F(\x_N) - F(\x_*)
        \le \frac{aLR^2}{N(N+2a-1)}
\label{eq:fv_fpgm_}
.\end{align}
\end{corollary}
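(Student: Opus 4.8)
The plan is to apply Theorem~\ref{thm:gen_fpgm} directly, since FPGM-$a$ is by definition the instance of GFPGM with $t_i = \frac{i+a}{a}$. Two things must be checked: first, that this choice of $\{t_i\}$ is admissible for GFPGM, i.e., satisfies the defining conditions~\eqref{eq:cond_gen_fpgm}; and second, that the resulting $T_{N-1}$ has a closed form producing the stated bound. Once both are in hand, the corollary is immediate.

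First I would verify admissibility. Clearly $t_0 = \frac{a}{a} = 1$ and $t_i > 0$ for all $i$ since $a \ge 2$, so only the substantive inequality $t_i^2 \le T_i$ remains, where $T_i := \sum_{l=0}^i t_l$. Evaluating the partial sum gives $T_i = \frac{1}{a}\sum_{l=0}^i (l+a) = \frac{(i+1)(i+2a)}{2a}$. Substituting this together with $t_i^2 = \frac{(i+a)^2}{a^2}$ and clearing denominators, the condition $t_i^2 \le T_i$ reduces to $(a-2)i^2 + a(2a-3)i \ge 0$, which holds for every $i \ge 0$ whenever $a \ge 2$, since both coefficients are then nonnegative. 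Hence $\{t_i\}$ is an admissible choice for GFPGM.

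Then I would substitute $N-1$ for $i$ in the closed form for $T_i$, obtaining $T_{N-1} = \frac{N(N+2a-1)}{2a}$, and insert this into the GFPGM bound~\eqref{eq:fv_gen_fpgm} of Theorem~\ref{thm:gen_fpgm}. This yields $F(\x_N) - F(\x_*) \le \frac{LR^2}{2T_{N-1}} = \frac{aLR^2}{N(N+2a-1)}$, which is exactly~\eqref{eq:fv_fpgm_}.

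There is no genuine obstacle here beyond bookkeeping; the only step that requires any care is confirming the admissibility inequality $t_i^2 \le T_i$, as this is precisely what licenses invoking Theorem~\ref{thm:gen_fpgm} and pins down the role of the restriction $a \ge 2$. The evaluation of the arithmetic-type sum $T_{N-1}$ and the final substitution are routine.
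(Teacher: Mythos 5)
Your proof is correct and takes essentially the same route as the paper's: both verify the admissibility condition~\eqref{eq:cond_gen_fpgm} via the same algebraic identity $T_i - t_i^2 = \frac{(a-2)i^2 + a(2a-3)i}{2a^2} \ge 0$ for $a\ge2$, and then invoke Thm.~\ref{thm:gen_fpgm} with $T_{N-1} = \frac{N(N+2a-1)}{2a}$. Your explicit checks of $t_0=1$ and $t_i>0$ are bookkeeping the paper leaves implicit; nothing else differs.
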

\begin{proof}
Thm.~\ref{thm:gen_fpgm} implies~\eqref{eq:fv_fpgm_}, since
$t_i = \frac{i+a}{a}$ satisfies~\eqref{eq:cond_gen_fpgm}, \ie,
\begin{align}
T_i - t_i^2 = \frac{(i+1)(i+2a)}{2a} - \frac{(i+a)^2}{a^2} 
	= \frac{(a-2)i^2 + a(2a-3)i}{2a^2}
	\ge 0
\label{eq:Ti_ti}
\end{align}
for any $a\ge2$ and all $i\ge0$.
\end{proof}

\subsection{Related work of GFPGM}

This section shows
that the GFPGM has a close connection to the accelerated algorithm
in~\cite{nesterov:05:smo}
that was developed specifically for a constrained smooth convex problem
with a closed convex set $Q$,
\ie,
\begin{align}
\phi(\x) = \I_Q(\x) := \begin{cases}
0, & \x \in Q, \\
\infty, & \text{otherwise.}
\end{cases}
\end{align}
The projection operator
$\P_Q(\x) := \argmin{\y\in Q} \allowbreak ||\x - \y||$
is used for the proximal gradient update~\eqref{eq:pm}.

We show that the GFPGM
can be written in the following equivalent form,
named GFPGM$'$,
which is similar to that of the accelerated algorithm in~\cite{nesterov:05:smo}
shown below.
Note that the accelerated algorithm in~\cite{nesterov:05:smo}
satisfies the bound~\eqref{eq:fv_gen_fpgm} of the GFPGM
in~\cite[Thm. 2]{nesterov:05:smo}
when $\phi(\x) = \I_Q(\x)$.

\fbox{
\begin{minipage}[t]{0.85\textwidth}
\vspace{-10pt}
\begin{flalign*}
&\quad \text{\bf Algorithm GFPGM$'$} & \\
&\quad \text{Input: } f\in \cF,\; \x_0\in\Reals^d,\;
        \y_0 = \x_0,\; t_0 = T_0 = 1. & \\
&\quad \text{For } i = 0,\ldots,N-1 & \\
&\quad \qquad \x_{i+1} = \pL(\y_i) = \y_i - \frac{1}{L} \tnabla F(\y_i) & \\
&\quad \qquad \z_{i+1} = \y_0 - \frac{1}{L}\sum_{k=0}^i t_k \tnabla F(\y_k) & \\
&\quad \qquad \text{Choose } t_{i+1} \text{ s.t. }
                t_{i+1} > 0 \text{ and }
                t_{i+1}^2 \le T_{i+1} = \sum_{l=0}^{i+1}t_l \\
&\quad \qquad \y_{i+1} = \paren{1 - \frac{t_{i+1}}{T_{i+1}}}\x_{i+1}
                + \frac{t_{i+1}}{T_{i+1}}\z_{i+1} &
\end{flalign*}
\end{minipage}
} \vspace{5pt}

\fbox{
\begin{minipage}[t]{0.85\textwidth}
\vspace{-10pt}
\begin{flalign*}
&\quad \text{\bf Algorithm \cite{nesterov:05:smo} for $\phi(\x) = \I_Q(\x)$} & \\
&\quad \text{Input: } f\in \cF,\; \x_0\in\Reals^d,\;
        \y_0 = \x_0,\; t_0 = T_0 = 1. & \\
&\quad \text{For } i = 0,\ldots,N-1 & \\
&\quad \qquad \x_{i+1} = \pL(\y_i) = \P_Q\paren{\y_i - \frac{1}{L} \nabla f(\y_i)} & \\
&\quad \qquad \z_{i+1} = \P_Q\paren{\y_0 - \frac{1}{L}\sum_{k=0}^i t_k \nabla f(\y_k)} & \\
&\quad \qquad \text{Choose } t_{i+1} \text{ s.t. }
                t_{i+1} > 0 \text{ and }
                t_{i+1}^2 \le T_{i+1} = \sum_{l=0}^{i+1}t_l \\
&\quad \qquad \y_{i+1} = \paren{1 - \frac{t_{i+1}}{T_{i+1}}}\x_{i+1}
                + \frac{t_{i+1}}{T_{i+1}}\z_{i+1} &
\end{flalign*}
\end{minipage}
} \vspace{5pt}

\begin{proposition}
\label{prop:fpgm_}
The sequence $\{\x_0,\cdots,\x_N\}$ generated
by GFPGM is identical
to the corresponding sequence generated by GFPGM$'$.
\end{proposition}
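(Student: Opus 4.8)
The plan is to exploit the fact that the two algorithms share everything except the internal bookkeeping: both are initialized with $\y_0 = \x_0$ and $t_0 = T_0 = 1$, both perform the same proximal gradient step $\x_{i+1} = \pL(\y_i)$, and both select $t_{i+1}$ (hence $T_{i+1}$) by the same rule. Consequently it suffices to show that the two prescriptions for $\y_{i+1}$ coincide whenever $\x_0,\dots,\x_{i+1}$ and $\y_0,\dots,\y_i$ already agree; an induction on $i$ then forces the full sequences $\{\y_i\}$ and $\{\x_i\}$ to be identical. The base case $i=0$ is immediate: in GFPGM both correction coefficients carry the factors $T_0 - t_0 = 0$ and $t_0^2 - T_0 = 0$, so $\y_1 = \x_1$, while in GFPGM$'$ one has $\z_1 = \y_0 - \tfrac{t_0}{L}\tnabla F(\y_0) = \pL(\y_0) = \x_1$, giving $\y_1 = \x_1$ as well.

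The core of the induction step is to eliminate the auxiliary variable $\z_{i+1}$ from the GFPGM$'$ update $\y_{i+1} = \x_{i+1} + \tfrac{t_{i+1}}{T_{i+1}}(\z_{i+1} - \x_{i+1})$. First I would record, using $\tnabla F(\y_i) = -L(\pL(\y_i) - \y_i) = L(\y_i - \x_{i+1})$, the telescoping recursion
\begin{align*}
\z_{i+1} = \z_i + t_i(\x_{i+1} - \y_i),
\end{align*}
where I adopt the convention $\z_0 := \y_0$ so that this holds for all $i \ge 0$. Second, I would rearrange the line of GFPGM$'$ that produced $\y_i$, namely $\y_i = \bigl(1 - \tfrac{t_i}{T_i}\bigr)\x_i + \tfrac{t_i}{T_i}\z_i$, into the closed form $\z_i = \x_i + \tfrac{T_i}{t_i}(\y_i - \x_i)$.

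Substituting this expression for $\z_i$ into the recursion and rewriting $\x_i - \x_{i+1}$ and $\y_i - \x_i$ in terms of the two increments $(\x_{i+1} - \x_i)$ and $(\x_{i+1} - \y_i)$ yields, after collecting terms,
\begin{align*}
\z_{i+1} - \x_{i+1} = \frac{T_i - t_i}{t_i}(\x_{i+1} - \x_i) + \frac{t_i^2 - T_i}{t_i}(\x_{i+1} - \y_i).
\end{align*}
Inserting this into $\y_{i+1} = \x_{i+1} + \tfrac{t_{i+1}}{T_{i+1}}(\z_{i+1} - \x_{i+1})$ then reproduces verbatim the GFPGM update
\begin{align*}
\y_{i+1} = \x_{i+1} + \frac{(T_i - t_i)t_{i+1}}{t_i T_{i+1}}(\x_{i+1} - \x_i) + \frac{(t_i^2 - T_i)t_{i+1}}{t_i T_{i+1}}(\x_{i+1} - \y_i),
\end{align*}
which closes the induction.

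I do not expect a genuine obstacle here, since the manipulations are elementary linear algebra; the only point requiring care is the bookkeeping at the lowest index, where $\z_0$ is not emitted by the loop and must be supplied by the convention $\z_0 = \y_0 = \x_0$ so that both the telescoping recursion and the rearranged identity $\z_i = \x_i + \tfrac{T_i}{t_i}(\y_i - \x_i)$ stay valid at $i = 0$ (they do, since $T_0/t_0 = 1$ and $\y_0 = \x_0$). An equivalent route simply handles $i=0$ as the separate base case above and runs the recursion only for $i \ge 1$; either way, the substitution identity for $\z_{i+1} - \x_{i+1}$ is the crux, and once it is established the agreement with the GFPGM update is exact.
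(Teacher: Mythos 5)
Your proof is correct, but it takes a genuinely different route from the paper's. The paper proves Prop.~\ref{prop:fpgm_} by passing through the~\FO representation: it invokes Prop.~\ref{prop:fpgm} to identify GFPGM with~\FO under the explicit step coefficients~\eqref{eq:h_gen_fpgm}, and then inducts on the weighted-sum form $\y_{n+1}' = \y_n' - \frac{1}{L}\sum_{k=0}^{n} h_{n+1,k}\tnabla F(\y_k')$, regrouping the double sum $\sum_{k=0}^{n-1}\sum_{j=k+1}^{n} h_{j,k}\tnabla F(\y_k)$ via the closed form of $h_{n+1,k}$ to recover $\paren{1-\frac{t_{n+1}}{T_{n+1}}}\x_{n+1} + \frac{t_{n+1}}{T_{n+1}}\z_{n+1}$. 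You instead bypass~\FO and the coefficients~\eqref{eq:h_gen_fpgm} entirely, comparing the two recursive forms directly: the telescoping identity $\z_{i+1} = \z_i + t_i(\x_{i+1}-\y_i)$ (with the convention $\z_0 = \y_0$) together with the inversion $\z_i = \x_i + \frac{T_i}{t_i}(\y_i - \x_i)$ eliminates $\z$, and your substitution identity $\z_{i+1} - \x_{i+1} = \frac{T_i - t_i}{t_i}(\x_{i+1}-\x_i) + \frac{t_i^2 - T_i}{t_i}(\x_{i+1}-\y_i)$ checks out exactly (the coefficients of $\x_{i+1}$, $\x_i$, $\y_i$ on both sides are $t_i - 1$, $\frac{t_i - T_i}{t_i}$, $\frac{T_i - t_i^2}{t_i}$ respectively), including the $i=0$ edge case since $T_0 = t_0 = 1$ and $\y_0 = \x_0$. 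What each approach buys: yours is shorter, self-contained, and works with only local two-term recursions, so it needs no appeal to Prop.~\ref{prop:fpgm}; the paper's detour keeps every algorithmic equivalence anchored to the~\FO class in which the PEP bounds and Lemma~\ref{lem:feas} are formulated, so the chain GFPGM $\leftrightarrow$ \FO with~\eqref{eq:h_gen_fpgm} $\leftrightarrow$ GFPGM$'$ is established once and reused by the worst-case theorems, whereas your direct argument would leave the \FO-equivalence to be proved separately (as the paper must anyway, in Prop.~\ref{prop:fpgm}).
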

\begin{proof}
See Appendix~\ref{appen3}.
\end{proof}

\noindent
Clearly GFPGM$'$ 
and the accelerated algorithm in~\cite{nesterov:05:smo}
are equivalent for the unconstrained smooth convex problem ($Q = \Reals^d$).
However, when the operation $\P_Q(\x)$ is relatively expensive,
our GFPGM and GFPGM$'$
that use one projection per iteration
could be preferred
over the accelerated algorithm in~\cite{nesterov:05:smo} 
that uses two projections per iteration.

\subsection{Optimizing step coefficients of~\FO
using the cost function form of PEP}
\label{sec:fista}

To find the step coefficients in the class~\FO
that are optimal in terms of the cost function form of PEP,
we would like to solve the following problem:
\begin{align}
\hat{\bmh}_{\mathrm{P}}
        := \argmin{\bmh\in\Reals^{N(N+1)/2}} \mathcal{B}_{\mathrm{P}}(\bmh,N,d,L,R)
	\tag{HP}
\label{eq:HP}
.\end{align}
Because~\eqref{eq:HP} seems intractable,
we instead optimize the step coefficients using the relaxed bound in~\eqref{eq:D}:
\begin{align}
\hat{\bmh}_{\mathrm{D}}
        := \argmin{\bmh\in\Reals^{N(N+1)/2}} \mathcal{B}_{\mathrm{D}}(\bmh,N,L,R)
	\tag{HD}
\label{eq:HD}
.\end{align}
The problem~\eqref{eq:HD} is bilinear,
and a convex relaxation technique in~\cite[Thm. 3]{drori:14:pof}
makes it solvable using numerical methods.
We optimized~\eqref{eq:HD} 
numerically for many choices of $N$
using a SDP solver
\cite{cvxi,gb08}
and based on our numerical results (not shown)
we conjecture that the feasible point in Lemma~\ref{lem:feas}
with $t_i^2 = T_i$ 
that corresponds to FPGM (FISTA)
is a global minimizer of~\eqref{eq:HD}.
It is straightforward to show that
the step coefficients in Lemma~\ref{lem:feas} with $t_i^2 = T_i$ give
the smallest bound of~\eqref{eq:D} and~\eqref{eq:fv_gen_fpgm}
among all feasible points in Lemma~\ref{lem:feas},
but showing optimality among all possible feasible points of~\eqref{eq:HD}
may require further derivations 
as in~\cite[Lemma~3]{kim:16:ofo} using KKT conditions,
which we leave as future work.

This section has provided a new worst-case bound proof of FPGM
using the relaxed PEP,
and suggested that FPGM corresponds to
\FO with optimized step coefficients
using the cost function form of the relaxed PEP.
The next section provides 
a different optimization of the step coefficients of~\FO
that targets the norm of the~\cgm,
because 
minimizing the norm of the composite gradient mapping
is important 
in dual problems 
(see \cite{devolder:12:dst,necoara:16:ica,nesterov:12:htm} and~\eqref{eq:dualgrad}).

\section{Relaxation and optimization of the~\cgm form of PEP}
\label{sec:pep,spgrad}

\subsection{Relaxation for the~\cgm form of PEP}
\label{sec:pep,spgrad1}

To form a worst-case bound on the norm of the~\cgm
for a given \bmh of~\FO,
we use the following PEP
that replaces $F(\x_N) - F(\x_*)$ in~\eqref{eq:PEP} by
the norm squared of the~\cgm.
Here, we consider the smallest~\cgm norm squared 
among all iterates\footnote{
\label{ft}
See Appendix~\ref{appen4}
for the discussion on the choice of $\Om$.
}
($\min_{\x\in\Om} ||L\,(\pL(\x) - \x)||^2 
= \min_{\x\in\Om} ||\tnabla F(\x)||^2$ 
where $\Om := \{\y_0,\cdots,\y_{N-1},\x_N\}$)
as follows:
\begin{align}
\mathcal{B}_{\mathrm{P'}}(\bmh,N,d,L,R) :=\;
& \max_{\substack{F\in\cF, \\
        \x_0,\cdots,\x_N\in\Reals^d,\; \x_*\in X_*(F), \\
	\y_0,\cdots,\y_{N-1}\in\Reals^d}}
        \min_{\x\in\Om}
	||L\,(\pL(\x) - \x)||^2
        \nonumber \\ 
        &\st \; \x_{i+1} = \pL(\y_i), \quad i=0,\ldots,N-1, 
	\quad ||\x_0 - \x_*|| \le R,
	\label{eq:PEP__} \tag{P$'$} \\
        &\quad\;\;\;    \y_{i+1} = \y_i + \sum_{k=0}^i \hkip (\x_{k+1} - \y_k),
                \quad i=0,\ldots,N-2. \nonumber 
\end{align}

Because this infinite-dimensional max-min problem appears intractable,
similar to the relaxation from~\eqref{eq:PEP} to~\eqref{eq:pPEP},
we relax~\eqref{eq:PEP__} to a finite-dimensional problem 
with
an additional constraint
resulting from~\eqref{eq:pg_decr}
that is equivalent to
\begin{align}
\frac{L}{2}||\pL(\x_N) - \x_N||^2 \le F(\x_N) - F(\x_*)
\label{eq:xNp_bound}
\end{align}
and conditions that are equivalent 
to $\alpha \le ||L\,(\pL(\x) - \x)||^2$ 
for all $\x \in \Om$
after replacing 
$\min_{\x\in\Om} ||L\,(\pL(\x) - \x)||^2$ 
by $\alpha$
as in~\cite{taylor:17:ssc}.\footnote{
Here, we simply relaxed~\eqref{eq:PEP__} into~\eqref{eq:pPEP__}
in a way that 
is similar to the relaxation from~\eqref{eq:PEP} to~\eqref{eq:pPEP}.
This relaxation resulted in a constructive analytical worst-case analysis 
on the composite gradient mapping
in this section
that is somewhat similar to that on the cost function 
in Section~\ref{sec:pep,cost}.
However, this relaxation on~\eqref{eq:pPEP__} 
turned out to be relatively loose 
compared to the relaxation on~\eqref{eq:pPEP}
(see Sec.~\ref{sec:disc}),
suggesting there is room for improvement 
in the future
with a tighter relaxation.
}
This relaxation leads to
\begin{align}
\mathcal{B}_{\mathrm{P1'}}(\bmh,N,d,L,R) :=\;
&\max_{\substack{\tG\in\Reals^{(N+1)\times d}, \\ \bmdel\in\Reals^N, \; \alpha\in\Reals}}
        L^2R^2\alpha
	\nonumber \\ 
        &\st \; \Tr{\tG^\top\bA_{i-1,i}(\bmh)\tG} \le \delta_{i-1} - \delta_i,
                \quad i=1,\ldots,N-1, 
		\label{eq:pPEP__} \tag{P1$'$} \\
	&\quad\;\;\; \Tr{\tG^\top\bD_i(\bmh)\tG + \bmnu \bmv_i^\top\tG} \le -\delta_i,
                \quad i=0,\ldots,N-1, \nonumber \\
	&\quad\;\;\; \Tr{\frac{1}{2}\tG^\top\bmv_N\bmv_N^\top\tG}
                \le \delta_{N-1}, \nonumber \\
	&\quad\;\;\; \Tr{-\tG^\top\bmv_i\bmv_i^\top\tG} \le - \alpha,
		\quad i=0,\ldots,N, \nonumber
\end{align}
for any given unit vector $\bmnu\in\Reals^d$,
by defining
the $(i+1)$th standard basis vector $\bmv_i = \bme_{i+1} \in \Reals^{N+1}$,
the matrices 
\begin{align}
\bA_{i-1,i}(\bmh) := \paren{\begin{array}{cc}
                        \A_{i-1,i}(\bmh) & \Zero \\
			\Zero^\top & 0
                \end{array}},
\quad
\bD_i(\bmh) := \paren{\begin{array}{cc}
                        \D_i(\bmh) & \Zero \\
                        \Zero^\top & 0
                \end{array}}
\label{eq:AD_}
\end{align}
where $\Zero = [0,\ldots,0]^\top \in\Reals^N$,
and the matrix $\tG = [\G^\top,\tg_N]^\top \in \Reals^{(N+1)\times d}$
where
\begin{align}
\tg_N := - \frac{1}{||\y_0 - \x_*||}(\pL(\x_N) - \x_N)
        = \frac{1}{L||\y_0 - \x_*||}\tnabla F(\x_N)
.\end{align}

Similar to~\eqref{eq:D}
and~\cite[Problem ($\mathrm{D''}$)]{kim:16:gto},
we have the following dual formulation of~\eqref{eq:pPEP__}
that could be solved using SDP:
\begin{align}
&\mathcal{B}_{\mathrm{D'}}(\bmh,N,L,R) :=\;
\min_{\substack{(\bmlam,\bmtau,\eta,\bmbeta)\in\Lambda', \\ \gamma\in\Reals}}
        \left\{
        \frac{1}{2}L^2R^2\gamma\;:\;
        \left(\begin{array}{cc}
                \bmS'(\bmh,\bmlam,\bmtau,\eta,\bmbeta) 
			& \frac{1}{2}[\bmtau^\top, 0]^\top \\
                \frac{1}{2}[\bmtau^\top,0] & \frac{1}{2}\gamma
        \end{array}\right)
        \succeq 0
        \right\}
        \label{eq:D__} \tag{D$'$}
\end{align}
where $\eta\in\Reals_+$, $\bmbeta = [\beta_0,\cdots,\beta_N]^\top\in\Reals_+^{N+1}$,
and
\begin{align}
&\Lambda' := \left\{(\bmlam,\bmtau,\eta,\bmbeta)\in\Reals_+^{3N+1} 
                \;:\;
		\begin{array}{l}
		\tau_0 = \lambda_1, \;\; \lambda_{N-1} + \tau_{N-1} = \eta, \;\;
		\sum_{i=0}^{N} \beta_i = 1, \\
                \lambda_i - \lambda_{i+1} + \tau_i = 0, \; i=1,\ldots,N-2
		\end{array}\right\},
\label{eq:Lam__} \\
&\bmS'(\bmh,\bmlam,\bmtau,\eta,\bmbeta)
        := \sum_{i=1}^{N-1} \lambda_i\bA_{i-1,i}(\bmh)
                + \sum_{i=0}^{N-1}\tau_i\bD_i(\bmh)
                + \frac{1}{2}\eta\bmv_N\bmv_N^\top
                - \sum_{i=0}^N\beta_i\bmv_i\bmv_i^\top
\label{eq:S__}
.\end{align}
The next section specifies a feasible point of interest
that is in the class of GFPGM
and analyzes the worst-case bound of the norm of the~\cgm.
Then we optimize the step coefficients of~\FO
with respect to the~\cgm form of PEP
leading to a new algorithm that differs from Nesterov's acceleration 
for decreasing the cost function.

\subsection{Worst-case analysis of the~\cgm of GFPGM}

The following lemma provides feasible point of~\eqref{eq:D__}
for the step coefficients~\eqref{eq:h_gen_fpgm} of GFPGM.

\begin{lemma}
\label{lem:feas__}
For the step coefficients $\{h_{i+1,k}\}$ in~\eqref{eq:h_gen_fpgm},
the choice of variables
\begin{align}
\lambda_i &= T_{i-1}\tau_0,
	\quad\!\!\! i=1,\ldots,N-1,
\quad\!\!
\tau_i = \begin{cases}
        \left(\frac{1}{2}\left(\sum_{k=0}^{N-1}\left(T_k - t_k^2\right)
        + T_{N-1}\right)\right)^{-1}, \;\; i = 0,\!\!\!\! \\
        t_i\tau_0, \hspace{93pt} i = 1,\ldots,N-1,\!\!\!\!
        \end{cases} 
\label{eq:par1_gen_fpgm_pg}
\end{align}
\begin{align}
\eta &= T_{N-1} \tau_0, 
\quad
\beta_i = \begin{cases}
        \frac{1}{2}\left(T_i - t_i^2\right)\tau_0, & i=0,\ldots,N-1, \\
        \frac{1}{2}T_{N-1}\tau_0, & i=N,
        \end{cases}
\quad
\gamma = \tau_0.
\label{eq:par2_gen_fpgm_pg}
\end{align}
is a feasible point of~\eqref{eq:D__} for any choice of $t_i$ and $T_i$
satisfying~\eqref{eq:cond_gen_fpgm}.
\end{lemma}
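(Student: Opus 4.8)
The plan is to follow the template of the proof of Lemma~\ref{lem:feas}: feasibility of~\eqref{eq:D__} splits into two independent checks, namely that the prescribed multipliers lie in the affine set $\Lambda'$ of~\eqref{eq:Lam__}, and that the bordered matrix in~\eqref{eq:D__} is positive semidefinite. The simplification I want to exploit is that the choice~\eqref{eq:par1_gen_fpgm_pg} and~\eqref{eq:par2_gen_fpgm_pg} sets $\lambda_i=T_{i-1}\tau_0$ and $\tau_i=t_i\tau_0$ (the latter valid for all $i=0,\ldots,N-1$ since $t_0=1$), i.e., exactly $\tau_0 T_{N-1}$ times the multipliers used in Lemma~\ref{lem:feas}. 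Hence the $\bA$- and $\bD$-contributions to $\bmS'(\bmh,\bmlam,\bmtau,\eta,\bmbeta)$ can be imported from Lemma~\ref{lem:feas} up to that common scalar, and the only genuinely new ingredient is the purely diagonal correction $\frac{1}{2}\eta\,\bmv_N\bmv_N^\top-\sum_{i=0}^N\beta_i\,\bmv_i\bmv_i^\top$ in~\eqref{eq:S__}.

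First I would verify $(\bmlam,\bmtau,\eta,\bmbeta)\in\Lambda'$. Nonnegativity of every multiplier follows from $t_i>0$, $T_i>0$, and the standing condition $T_i-t_i^2\ge0$ in~\eqref{eq:cond_gen_fpgm} (the last being what guarantees $\beta_i\ge0$ for $i\le N-1$), together with $\tau_0>0$, which holds because its reciprocal $\frac{1}{2}(\sum_{k=0}^{N-1}(T_k-t_k^2)+T_{N-1})$ is strictly positive. The three affine relations are immediate from $T_0=t_0=1$ and $T_i=T_{i-1}+t_i$: indeed $\lambda_1=T_0\tau_0=\tau_0$, $\lambda_i-\lambda_{i+1}+\tau_i=(T_{i-1}-T_i+t_i)\tau_0=0$, and $\lambda_{N-1}+\tau_{N-1}=(T_{N-2}+t_{N-1})\tau_0=T_{N-1}\tau_0=\eta$. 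The normalization $\sum_{i=0}^N\beta_i=1$ is less a constraint to check than the defining equation for $\tau_0$: summing~\eqref{eq:par2_gen_fpgm_pg} gives $\sum_{i=0}^N\beta_i=\frac{1}{2}\tau_0(\sum_{i=0}^{N-1}(T_i-t_i^2)+T_{N-1})$, which equals $1$ precisely for the prescribed value of $\tau_0$.

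Next I would assemble $\bmS'$. In the top-left $N\times N$ block the $\bA$/$\bD$ part reproduces the matrix computed in Lemma~\ref{lem:feas}, scaled by $\tau_0 T_{N-1}$, giving off-diagonal entries $\frac{1}{2}t_it_k\tau_0$ and diagonal entries $\frac{1}{2}T_i\tau_0$ for $i,k\in\{0,\ldots,N-1\}$; here the only bookkeeping subtlety is that the relation $\lambda_{N-1}+\tau_{N-1}=\eta$ (rather than $=1$) now produces the $(N-1,N-1)$ diagonal $\frac{1}{2}\eta=\frac{1}{2}T_{N-1}\tau_0$, consistent with the general $\frac{1}{2}T_i\tau_0$ pattern. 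Adding the diagonal correction then performs the two cancellations that make the argument work: on each index $i\le N-1$ the entry becomes $\frac{1}{2}T_i\tau_0-\beta_i=\frac{1}{2}t_i^2\tau_0$, while the $(N,N)$ entry becomes $\frac{1}{2}\eta-\beta_N=0$ and the row and column $N$ are otherwise zero by the zero-padding in~\eqref{eq:AD_}. Consequently $\bmS'=\frac{\tau_0}{2}\bmw\bmw^\top$ with $\bmw=(t_0,\ldots,t_{N-1},0)^\top\in\Reals^{N+1}$.

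Finally, since $[\bmtau^\top,0]^\top=\tau_0\bmw$ and $\gamma=\tau_0$, the bordered matrix of~\eqref{eq:D__} is the rank-one matrix $\frac{\tau_0}{2}\z\z^\top$ with $\z=(t_0,\ldots,t_{N-1},0,1)^\top\in\Reals^{N+2}$, which is $\succeq0$ because $\tau_0>0$; this completes the feasibility check. I expect the main obstacle to be organizational rather than conceptual: correctly carrying the index-$N$ row and column through~\eqref{eq:S__} and confirming the two cancellations ($T_i\mapsto t_i^2$ on the diagonal via $\beta_i$, and the vanishing of the $(N,N)$ entry via $\frac{1}{2}\eta=\beta_N$), since these are exactly the identities that collapse $\bmS'$ to a single outer product.
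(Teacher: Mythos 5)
Your proposal is correct and follows essentially the same route as the paper's proof: verify membership in $\Lambda'$~\eqref{eq:Lam__}, then collapse the bordered matrix of~\eqref{eq:D__} to the rank-one positive-semidefinite certificate $\frac{\tau_0}{2}\ttt\ttt^\top$ with $\ttt=(t_0,\cdots,t_{N-1},0,1)^\top$, which is exactly the paper's conclusion. The one difference is organizational rather than substantive, and it is a welcome streamlining: instead of redoing the entrywise computation of $\bmS'$ as the paper does, you import it from Lemma~\ref{lem:feas} by observing that $(\bmlam,\bmtau)$ here equals $\tau_0 T_{N-1}$ times the multipliers there (valid by linearity of~\eqref{eq:S__} in the multipliers, with $\eta=T_{N-1}\tau_0$ correctly replacing the normalization $\lambda_{N-1}+\tau_{N-1}=1$), so that only the new diagonal terms $\frac{1}{2}\eta\,\bmv_N\bmv_N^\top-\sum_{i=0}^N\beta_i\bmv_i\bmv_i^\top$ require fresh computation.
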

\begin{proof}
It is obvious that $(\bmlam,\bmtau,\eta,\bmbeta)$ 
in~\eqref{eq:par1_gen_fpgm_pg} and~\eqref{eq:par2_gen_fpgm_pg}
with~\eqref{eq:cond_gen_fpgm}
is in $\Lambda'$~\eqref{eq:Lam__}.
Using~\eqref{eq:ABCDF} and~\eqref{eq:AD_},
the $(i,k)$th entry of the symmetric matrix $\bmS'(\bmh,\bmlam,\bmtau,\eta,\bmbeta)$ 
in~\eqref{eq:S__} can be written as
\begin{align*}
&S_{i,k}'(\bmh,\bmlam,\bmtau,\eta,\bmbeta) \\
=& \begin{cases}
        \frac{1}{2}\paren{(\lambda_i + \tau_i) h_{i,k} + \tau_i\sum_{j=k+1}^{i-1}h_{j,k}},
                & i=2,\ldots,N-1,\;k=0,\ldots,i-2, \\
        \frac{1}{2}\paren{(\lambda_i + \tau_i) h_{i,i-1} - \lambda_i},
                & i=1,\ldots,N-1,\;k=i-1, \\
        \frac{1}{2}\lambda_{i+1} - \beta_i,
                & i=0,\ldots,N-2,\;k=i, \\
        \frac{1}{2}\eta - \beta_i,
		& i=N-1,N,\;k=i, \\
	0, 
		& i=N,\;k=0,\ldots,i-1,
        \end{cases}
\end{align*}
and inserting~\eqref{eq:h_gen_fpgm},~\eqref{eq:par1_gen_fpgm_pg},
and~\eqref{eq:par2_gen_fpgm_pg} yields
\begin{align*}
&S_{i,k}'(\bmh,\bmlam,\bmtau,\eta,\bmbeta) \\
= &\begin{cases}
        \frac{1}{2}\bigg(T_i\tau_0\frac{t_i}{T_i}
                \paren{t_k - \sum_{j=k+1}^{i-1}h_{j,k}}
                + t_i\tau_0 &\!\!\!\!\! \sum_{j=k+1}^{i-1}h_{j,k}\bigg), \\
                & i=2,\ldots,N-1,\;k = 0,\ldots,i-2, \\
        \frac{1}{2}\paren{T_i\tau_0\paren{1 + \frac{(t_{i-1} - 1)t_i}{T_i}}
                - T_{i-1}\tau_0},
                & i=1,\ldots,N-1,\;k=i-1, \\
        \frac{1}{2}T_i\tau_0 - \frac{1}{2}(T_i - t_i^2)\tau_0,        
		& i=0,\ldots,N-1,\;k=i, \\
	0,	
		& i=N,\;k=0,\ldots,i,
        \end{cases} \\
= &\begin{cases}
	\frac{1}{2}t_it_k\tau_0, & i=0,\ldots,N-1,\;k=0,\ldots,i, \\
   	0, & i=N,\;k=0,\ldots,i.
    \end{cases}
\end{align*}
Finally, by defining $\ttt = \paren{t_0,\cdots,t_{N-1},0,1}^\top$
we have the feasibility condition of~\eqref{eq:D__}:
\begin{align*}
\left(\begin{array}{cc}
                \bmS'(\bmh,\bmlam,\bmtau,\eta,\bmbeta) 
			& \frac{1}{2}[\bmtau^\top,0]^\top \\
                \frac{1}{2}[\bmtau^\top, 0] & \frac{1}{2}\gamma
        \end{array}\right)
        = \frac{1}{2}\ttt\ttt\tau_0
        \succeq 0
.\end{align*}
\end{proof}

Using Lemma~\ref{lem:feas__}, 
the following theorem bounds the (smallest) norm 
of the composite gradient mapping for the GFPGM iterates.
\begin{theorem}
\label{thm:gen_fpgm_pg}
Let $f\;:\;\Reals^d\rightarrow\Reals$ be in \cF
and let $\x_0,\cdots,\x_N,\y_0,\cdots,\y_{N-1} \allowbreak \in \Reals^d$ 
be generated by GFPGM. 
Then for $N\ge1$,
\begin{align}
\min_{i\in\{0,\ldots,N\}} ||\tnabla F(\x_i)||
\le 
\min_{\x\in\Om} ||\tnabla F(\x)||
        \le \frac{LR}
                {\sqrt{\sum_{k=0}^{N-1}\left(T_k - t_k^2\right) + T_{N-1}}}
\label{eq:gen_fpgm_pg_conv}
.\end{align}
\end{theorem}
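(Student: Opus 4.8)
The plan is to mirror the proof of Theorem~\ref{thm:gen_fpgm}, replacing the cost-function relaxation chain by the \cgm chain \eqref{eq:PEP__}--\eqref{eq:D__}. First I would record the inequality chain
\begin{align*}
\min_{\x\in\Om} ||\tnabla F(\x)||^2
&\le \mathcal{B}_{\mathrm{P'}}(\bmh,N,d,L,R) \\
&\le \mathcal{B}_{\mathrm{P1'}}(\bmh,N,d,L,R)
\le \mathcal{B}_{\mathrm{D'}}(\bmh,N,L,R).
\end{align*}
Here the first inequality holds because the GFPGM iterates---equivalently, \FO run with the step coefficients \eqref{eq:h_gen_fpgm}, by Prop.~\ref{prop:fpgm}---together with the underlying $F\in\cF$ form a feasible point of the maximization \eqref{eq:PEP__} (the assumption $||\x_0-\x_*||\le R$ is exactly the feasibility constraint there), so the objective value $\min_{\x\in\Om}||\tnabla F(\x)||^2$ at this point is bounded by the maximum. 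The second inequality is the relaxation from \eqref{eq:PEP__} to \eqref{eq:pPEP__}, and the third is weak duality between \eqref{eq:pPEP__} and its dual \eqref{eq:D__}.

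Next I would evaluate the right-hand side at the dual-feasible point furnished by Lemma~\ref{lem:feas__}. Since $\mathcal{B}_{\mathrm{D'}}$ is a minimization, it is bounded above by the objective $\tfrac12 L^2R^2\gamma$ at that feasible point, and \eqref{eq:par1_gen_fpgm_pg}--\eqref{eq:par2_gen_fpgm_pg} give $\gamma=\tau_0=\paren{\tfrac12\paren{\sum_{k=0}^{N-1}(T_k-t_k^2)+T_{N-1}}}^{-1}$, whence
\begin{align*}
\mathcal{B}_{\mathrm{D'}}(\bmh,N,L,R)
\le \tfrac12 L^2R^2\gamma
= \frac{L^2R^2}{\sum_{k=0}^{N-1}(T_k-t_k^2)+T_{N-1}}.
\end{align*}
Chaining this with the display above and taking square roots delivers the stated bound on $\min_{\x\in\Om}||\tnabla F(\x)||$.

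It remains to justify the leftmost inequality $\min_{i\in\{0,\ldots,N\}}||\tnabla F(\x_i)||\le\min_{\x\in\Om}||\tnabla F(\x)||$. For this I would appeal to Lemma~\ref{lem:pgmono}: each $\y_i\in\Om$ satisfies $\x_{i+1}=\pL(\y_i)$, so \eqref{eq:pg_desc} gives $||\tnabla F(\x_{i+1})||\le||\tnabla F(\y_i)||$, while $\x_N\in\Om$ is itself one of the $\x_i$. Thus every element of $\Om$ is dominated in \cgm norm by some $\x_i$, so the minimum over the $\x_i$ cannot exceed the minimum over $\Om$.

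The final assembly here is essentially mechanical once Lemma~\ref{lem:feas__} is available, so I do not expect it to be the obstacle. The genuine difficulty sits upstream, in the construction leading to \eqref{eq:pPEP__} and \eqref{eq:D__}: one must confirm that \eqref{eq:pPEP__} really is a valid relaxation of \eqref{eq:PEP__}---in particular that $\alpha$ legitimately serves as a common lower bound on $||\tnabla F(\x)||^2$ over $\x\in\Om$ and that the auxiliary constraint \eqref{eq:xNp_bound} is encoded so that weak duality applies---and that the feasible point of Lemma~\ref{lem:feas__} genuinely lies in $\Lambda'$ and certifies the matrix inequality in \eqref{eq:D__}. Those verifications, rather than the square-root bookkeeping, are where care is needed.
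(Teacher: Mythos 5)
Your proposal is correct and follows essentially the same route as the paper: the paper's proof likewise invokes Lemma~\ref{lem:pgmono} for the leftmost inequality, then chains $\min_{\x\in\Om}||\tnabla F(\x)||^2 \le \mathcal{B}_{\mathrm{D'}}(\bmh,N,L,R)$ (via Prop.~\ref{prop:fpgm} and the relaxation/weak-duality steps you spell out) and evaluates the dual objective $\frac{1}{2}L^2R^2\gamma$ at the feasible point of Lemma~\ref{lem:feas__} to obtain $\frac{L^2R^2}{\sum_{k=0}^{N-1}(T_k-t_k^2)+T_{N-1}}$. Your closing caveats about validating \eqref{eq:pPEP__} and \eqref{eq:D__} concern machinery the paper establishes before this theorem, so they do not indicate a gap in the argument itself.
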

\begin{proof}
Lemma~\ref{lem:pgmono} implies the first inequality of~\eqref{eq:gen_fpgm_pg_conv}.
Using~\eqref{eq:D__}, Lemma~\ref{lem:feas__} and Prop.~\ref{prop:fpgm},
we have
\begin{align*}
\min_{\x\in\Om} ||\tnabla F(\x)||^2 
\le \mathcal{B}_{\mathrm{D'}}(\bmh,N,L,R)
= \frac{1}{2}L^2R^2\gamma
= \frac{L^2R^2}{\sum_{k=0}^{N-1}\left(T_k - t_k^2\right) + T_{N-1}}
,\end{align*}
which is equivalent to~\eqref{eq:gen_fpgm_pg_conv}.
\end{proof}

Although the bound~\eqref{eq:gen_fpgm_pg_conv} is not 
tight
due to the relaxation on PEP,
next two sections show that there exists choices of $t_i$
that provide a rate $O(1/N^{\frac{3}{2}})$
for decreasing the~\cgm,
including the choice 
that optimizes the~\cgm form of PEP.

FGM for smooth convex minimization
was shown to achieve the rate $O(1/N^{\frac{3}{2}})$ 
for the decrease of the usual gradient
in~\cite{kim:16:gto}.
In contrast, Thm.~\ref{thm:gen_fpgm_pg} 
provides only a $O(1/N)$ bound for FPGM
(or GFPGM with $t_i$~\eqref{eq:ti})
on the decrease of the~\cgm
since $T_i = t_i^2$ for all $i$ 
and the value of $T_{N-1}$ is $O(N^2)$ for $t_i$~\eqref{eq:ti}.
Sec.~\ref{sec:disc} below numerically studies a tight bound on
the~\cgm of FPGM 
and illustrates that it has a rate that is faster than 
the rate $O(1/N)$ of Thm.~\ref{thm:gen_fpgm_pg},
indicating there is a room for improvement in the~\cgm form of the relaxed PEP.

\subsection{Optimizing step coefficients of~\FO
using the~\cgm form of PEP}
\label{sec:fpgmocg}

To optimize the step coefficients in the class~\FO
in terms of the~\cgm form of the relaxed PEP~\eqref{eq:D__},
we would like to solve the following problem:
\begin{align}
\hat{\bmh}_{\mathrm{D'}}
        := \argmin{\bmh\in\Reals^{N(N+1)/2}} \mathcal{B}_{\mathrm{D'}}(\bmh,N,L,R)
\tag{HD$'$}
\label{eq:HD__}
.\end{align}
Similar to~\eqref{eq:HD},
we use a convex relaxation~\cite[Thm. 3]{drori:14:pof}
to make the bilinear problem~\eqref{eq:HD__}
solvable using numerical methods.
We then numerically optimized~\eqref{eq:HD__}
for many choices of $N$ using a SDP solver
\cite{cvxi,gb08}
and found that the following choice of $t_i$:
\begin{align}
t_i &= \begin{cases}
        1, & i = 0, \\
        \frac{1 + \sqrt{1 + 4t_{i-1}^2}}{2}, & i = 1, \ldots, \fNh - 1, \\
        \frac{N-i+1}{2}, & i = \fNh, \ldots, N-1,
        \end{cases}
\label{eq:ti_rule__}
\end{align}
makes the feasible point in Lemma~\ref{lem:feas__} optimal
empirically with respect to the relaxed bound~\eqref{eq:HD__}.
Interestingly, whereas the usual $t_i$ factors 
(such as~\eqref{eq:ti} and $t_i=\frac{i+a}{a}$ for any $a\ge2$)
increase with $i$ indefinitely,
here, the factors begin decreasing after $i=\fNh-1$.

We also noticed numerically that finding the $t_i$
that minimizes the bound~\eqref{eq:gen_fpgm_pg_conv},
\ie,
solving the following constrained quadratic problem:
\begin{align}
\max_{\{t_i\}} \left\{ 
                \sum_{k=0}^{N-1}\paren{\sum_{l=0}^kt_l - t_k^2}
                + \sum_{l=0}^{N-1}t_l\right\}
\quad\st\quad
t_i \text{ satisfies}~\eqref{eq:cond_gen_fpgm}
\text{ for all } i,
\label{eq:quad}
\end{align}
is equivalent to optimizing~\eqref{eq:HD__}.
This means that the solution of~\eqref{eq:quad}
numerically appears equivalent to~\eqref{eq:ti_rule__},
the (conjectured) solution of~\eqref{eq:HD__}.
Interestingly, 
the unconstrained maximizer of~\eqref{eq:quad} 
without the constraint~\eqref{eq:cond_gen_fpgm}
is $t_i = \frac{N-i+1}{2}$,
and this partially appears
in the constrained maximizer~\eqref{eq:ti_rule__} of the problem~\eqref{eq:quad}.

Based on this numerical evidence, we conjecture that 
the solution $\hat{\bmh}_{\mathrm{D'}}$
of problem
\eqref{eq:HD__}
corresponds to~\eqref{eq:h_gen_fpgm} with~\eqref{eq:ti_rule__}.
Using Prop.~\ref{prop:fpgm},
the following GFPGM form with~\eqref{eq:ti_rule__}
is equivalent to~\FO with the step coefficients~\eqref{eq:h_gen_fpgm} 
for~\eqref{eq:ti_rule__}
that are optimized step coefficients of~\FO
with respect to the norm of the~\cgm,
which we name FPGM-\OPG (\OPG for optimized over~\cgm). 

\fbox{
\begin{minipage}[t]{0.85\linewidth}
\vspace{-10pt}
\begin{flalign*}
&\quad \text{\bf Algorithm FPGM-\OPG (GFPGM with $t_i$ in~\eqref{eq:ti_rule__})} & \\
&\quad \text{Input: } f\in C_L^{1,1}(\Reals^d)\text{ convex},\; \x_0\in\Reals^d,\;
        \y_0 = \x_0,\; t_0 = T_0 = 1. & \\
&\quad \text{For } i = 0,\ldots,N-1 & \\
&\quad \qquad \x_{i+1} = \pL(\y_i) & \\
&\quad \qquad t_{i+1}
                = \begin{cases}
                        \frac{1+\sqrt{1+4t_i^2}}{2}, & i=1,\ldots,\fNh-2, \\
                        \frac{N-i}{2}, & i = \fNh-1,\ldots,N-2,
                        \end{cases} & \\
&\quad \qquad \y_{i+1} = \x_{i+1}
                + \frac{(T_i - t_i)t_{i+1}}{t_iT_{i+1}}(\x_{i+1} - \x_i) \\
&\quad \qquad \hspace{53pt}
	+ \frac{(t_i^2 - T_i)t_{i+1}}{t_iT_{i+1}}(\x_{i+1} - \y_i),
		\quad i < N-1
\end{flalign*}
\end{minipage}
} \vspace{5pt}

The following theorem bounds the cost function 
and the (smallest) norm of the \cgm
for the FPGM-\OPG iterates.

\begin{theorem}
\label{thm:fpgm_pg}
Let $F\;:\;\Reals^d\rightarrow\Reals$ be in \cF
and let $\x_0,\cdots,\x_N,\y_0,\cdots,\y_{N-1} \in \Reals^d$ 
be generated by
FPGM-\OPG. Then for $N\ge1$,
\begin{align}
F(\x_N) - F(\x_*)
	\le \frac{4L||\x_0 - \x_*||^2}{N(N+4)},
\label{eq:fv_fpgm_pg}
\end{align}
and for $N\ge3$,
\begin{align}
\min_{i\in\{0,\ldots,N\}} ||\tnabla F(\x_i)||
\le 
\min_{\x\in\Om} ||\tnabla F(\x)|| 
	\le \frac{2\sqrt{6}LR}{N\sqrt{N-2}}
\label{eq:fpgm_pg_conv}
.\end{align}
\end{theorem}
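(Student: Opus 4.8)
The plan is to obtain both bounds by specializing the two GFPGM results, Theorem~\ref{thm:gen_fpgm} and Theorem~\ref{thm:gen_fpgm_pg}, to the particular sequence $\{t_i\}$ in~\eqref{eq:ti_rule__}, and then to lower bound the two quantities $T_{N-1}$ and $\sum_{k=0}^{N-1}(T_k - t_k^2) + T_{N-1}$ that appear in those bounds. Writing $m := \fNh$, the first segment $i=0,\ldots,m-1$ of~\eqref{eq:ti_rule__} is exactly the FPGM recursion, so the FISTA identity $t_i^2 = T_i$ holds there (in particular $T_{m-1} = t_{m-1}^2$ and $t_{m-1} \ge \frac{m+1}{2}$), while on the tail $i=m,\ldots,N-1$ we have the explicit $t_i = \frac{N-i+1}{2}$. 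Before invoking the GFPGM theorems I would first confirm that~\eqref{eq:ti_rule__} is admissible, i.e.\ that it satisfies~\eqref{eq:cond_gen_fpgm}; since $T_k - t_k^2$ is increasing in $k$ over the tail, it suffices to check $T_{m-1} \ge \frac{(N-m)^2-1}{4}$ at the junction $k=m$, which follows from $t_{m-1}\ge\frac{m+1}{2}$ after separating the even and odd cases of $m=\fNh$.

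For the cost function bound~\eqref{eq:fv_fpgm_pg}, Theorem~\ref{thm:gen_fpgm} (with $\|\x_0-\x_*\|$ in place of $R$) gives $F(\x_N)-F(\x_*) \le \frac{L\|\x_0-\x_*\|^2}{2T_{N-1}}$, so it remains to show $T_{N-1} \ge \frac{N(N+4)}{8}$. I would split $T_{N-1} = T_{m-1} + \sum_{l=m}^{N-1} t_l$, bound the head by $T_{m-1} = t_{m-1}^2 \ge \frac{(m+1)^2}{4}$, and evaluate the tail arithmetic series as $\sum_{l=m}^{N-1} \frac{N-l+1}{2} = \frac{(N-m)(N-m+3)}{4}$. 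Substituting $m=\fNh$ and treating $N$ even and odd separately reduces the claim to the elementary inequalities $4p^2+10p+2 \ge 4p^2+8p$ and $(2p+2)(2p+5) \ge (2p+1)(2p+5)$, both clearly true, which yields the desired $T_{N-1}\ge \frac{N(N+4)}{8}$.

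For the~\cgm bound~\eqref{eq:fpgm_pg_conv}, Theorem~\ref{thm:gen_fpgm_pg} reduces the task to showing $S := \sum_{k=0}^{N-1}(T_k - t_k^2) + T_{N-1} \ge \frac{N^2(N-2)}{24}$. Because $T_k = t_k^2$ on the head, only the tail contributes to the first sum, so $S = \sum_{k=m}^{N-1}(T_k - t_k^2) + T_{N-1}$. Here I would write $T_k = T_{m-1} + \frac{(N-m+1)(N-m+2)-(N-k)(N-k+1)}{4}$ and $t_k^2 = \frac{(N-k+1)^2}{4}$ in closed form, substitute $r = N-k$, and reduce $S$ to polynomial sums $\sum_r r^2$, $\sum_r r$, $\sum_r 1$ together with the lower bound $T_{m-1}\ge\frac{(m+1)^2}{4}$. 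The expected main obstacle is precisely this tail computation: it is a cubic-in-$N$ bookkeeping exercise in which the floor $m=\fNh$ forces a separate even/odd analysis, and one must keep the three contributions $(N-m)T_{m-1}$, $\frac{(N-m)(N-m+1)(N-m+2)}{4}$, and $-\frac14\sum_{r=1}^{N-m}(r+1)(2r+1)$ organized so that the slack remaining after collecting terms stays nonnegative. The leading-order check already supports the target: with $m\approx N/2$ these terms contribute $\frac{N^3}{32}+\frac{N^3}{32}-\frac{N^3}{48} = \frac{N^3}{24}$, matching $\frac{N^2(N-2)}{24}$, so the clean constant in~\eqref{eq:fpgm_pg_conv} should emerge (with room to spare) once the exact integer sums and the even/odd cases are carried through.
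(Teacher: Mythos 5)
Your proposal is correct and follows essentially the same route as the paper: its proof of Thm.~\ref{thm:fpgm_pg} likewise specializes Thms.~\ref{thm:gen_fpgm} and~\ref{thm:gen_fpgm_pg} to the sequence~\eqref{eq:ti_rule__}, lower-bounds $T_{N-1}$ via $T_{m-1}=t_{m-1}^2\ge\frac{(m+1)^2}{4}$ plus the arithmetic tail sum, and establishes $\sum_{k=0}^{N-1}(T_k-t_k^2)+T_{N-1}\ge\frac{1}{24}(N-2)N^2$ (inequality~\eqref{eq:app5}, carried out in Appendix~\ref{appen5}) by exactly the tail bookkeeping you outline, whose leading-order constants you verified correctly. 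The only differences are cosmetic: you add an explicit check that~\eqref{eq:ti_rule__} satisfies~\eqref{eq:cond_gen_fpgm} (which the paper leaves implicit in calling FPGM-\OPG an instance of GFPGM), and you argue by even/odd cases, whereas the paper avoids case splits by using the uniform bounds $m\ge\frac{N-1}{2}$, $N-m\ge\frac{N}{2}$, and $(m+1)^2\ge(N-m)^2$.
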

\begin{proof}
FPGM-\OPG is an instance of the GFPGM,
and thus Thm.~\ref{thm:gen_fpgm} implies \eqref{eq:fv_fpgm_pg}
using
\begin{align*}
T_{N-1} &= T_{m-1} + \sum_{k=m}^{N-1} t_k
        = t_{m-1}^2 + \sum_{k=m}^{N-1} \frac{N-k+1}{2} 
	= t_{m-1}^2 + \sum_{k'=2}^{N-m+1} \frac{k'}{2} \\
        &\ge \frac{(m+1)^2}{4} + \frac{(N-m+1)(N-m+2)}{4} - \frac{1}{2}
        \ge \frac{2N^2+8N+1}{16}
,\end{align*}
where $m = \fNh \ge \frac{N-1}{2}$, $N-m\ge\frac{N}{2}$,
and $T_{m-1} = t_{m-1}^2 \ge \frac{(m+1)^2}{4}$~\eqref{eq:ti_rule}.

In addition, Thm.~\ref{thm:gen_fpgm_pg} implies~\eqref{eq:fpgm_pg_conv},
using
\begin{align}
\sum_{k=0}^{N-1} \left(T_k - t_k^2\right) + T_{N-1}
	\ge \frac{1}{24}(N-2)N^2
\label{eq:app5}
,\end{align}
which we prove in the Appendix~\ref{appen5}.
\end{proof}

The~\cgm bound~\eqref{eq:fpgm_pg_conv} of FPGM-\OPG
is asymptotically $\frac{2\sqrt{2}}{3}$-times smaller 
than the bound~\eqref{eq:pg_fpgmh} of FPGM-$\paren{\m\!=\!\fNm}$.
In addition, the cost function bound~\eqref{eq:fv_fpgm_pg} of FPGM-\OPG
satisfies the optimal rate $O(1/N^2)$,
although the bound~\eqref{eq:fv_fpgm_pg}
is two-times larger than the analogous bound~\eqref{eq:fv_fpgm} of FPGM.

\subsection{Decreasing the~\cgm with a rate $O(1/N^{\frac{3}{2}})$ 
without selecting $N$ in advance}
\label{sec:decr}

FPGM-\OPG and FPGM-\m satisfy a fast rate $O(1/N^{\frac{3}{2}})$
for decreasing the norm of the~\cgm
but require one to select the total number of iterations $N$ in advance,
which could be undesirable in practice.
One could use FPGM-$\sigma$ in~\cite{monteiro:13:aah}
that does not require
selecting $N$ in advance,
but instead we suggest a new choice of $t_i$ in GFPGM
that 
satisfies a~\cgm bound that 
is lower than the bound~\eqref{eq:fpgmsig} of FPGM-$\sigma$.

Based on Thm.~\ref{thm:gen_fpgm_pg},
the following corollary shows 
that GFPGM with $t_i = \frac{i+a}{a}$ (FPGM-$a$)
for any $a>2$ 
satisfies the rate $O(1/N^{\frac{3}{2}})$ of the norm of the~\cgm
without selecting $N$ in advance.
(Cor.~\ref{cor:cost} showed that FPGM-$a$ for any $a\ge2$
satisfies the optimal rate $O(1/N^2)$ 
of the cost function.)

\begin{corollary}
Let $f\;:\;\Reals^d\rightarrow\Reals$ be in \cF
and let $\x_0,\cdots,\x_N,\y_0,\cdots,\y_{N-1} \allowbreak \in \Reals^d$ 
be generated by GFPGM
with $t_i = \frac{i+a}{a}$ (FPGM-$a$)
for any $a\ge2$.
Then for $N\ge1$,
we have the following bound on the (smallest)~\cgm:
\begin{align}
\min_{i\in\{0,\ldots,N\}} ||\tnabla F(\x_i)||
&\le 
\min_{\x\in\Om} ||\tnabla F(\x)|| 
	\nonumber \\
        &\le \frac{a\sqrt{6}LR}{\sqrt{N((a-2)N^2 + 3(a^2-a+1)N + (3a^2+2a-1))}}
\label{eq:cor_fpgm_pg_conv}
.\end{align}
\end{corollary}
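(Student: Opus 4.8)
The plan is to invoke Theorem~\ref{thm:gen_fpgm_pg} directly and reduce the corollary to a closed-form evaluation of the denominator appearing in~\eqref{eq:gen_fpgm_pg_conv}. Since $t_i = \frac{i+a}{a}$ with $a\ge2$ was already shown in the proof of Cor.~\ref{cor:cost} to satisfy~\eqref{eq:cond_gen_fpgm}, FPGM-$a$ is a legitimate instance of GFPGM, so Thm.~\ref{thm:gen_fpgm_pg} applies and gives
\begin{align*}
\min_{\x\in\Om} ||\tnabla F(\x)|| \le \frac{LR}{\sqrt{\sum_{k=0}^{N-1}(T_k - t_k^2) + T_{N-1}}}.
\end{align*}
Thus everything comes down to evaluating $\sum_{k=0}^{N-1}(T_k - t_k^2) + T_{N-1}$ in closed form.

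First I would reuse the identity~\eqref{eq:Ti_ti}, namely $T_i - t_i^2 = \frac{(a-2)i^2 + a(2a-3)i}{2a^2}$, together with $T_{N-1} = \frac{N(N+2a-1)}{2a}$ (which follows from $T_i = \frac{(i+1)(i+2a)}{2a}$). Applying the standard power sums $\sum_{k=0}^{N-1}k = \frac{(N-1)N}{2}$ and $\sum_{k=0}^{N-1}k^2 = \frac{(N-1)N(2N-1)}{6}$ to the first term, I would obtain
\begin{align*}
\sum_{k=0}^{N-1}(T_k - t_k^2) = \frac{(N-1)N}{12a^2}\bigl(2(a-2)N + 6a^2 - 10a + 2\bigr),
\end{align*}
after which I would add $T_{N-1}$, placing it over the common denominator $12a^2$.

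The decisive step is the final collection of terms, where I would show that
\begin{align*}
\sum_{k=0}^{N-1}(T_k - t_k^2) + T_{N-1}
= \frac{N}{6a^2}\bigl((a-2)N^2 + 3(a^2-a+1)N + (3a^2+2a-1)\bigr).
\end{align*}
The crux is recognizing that, after expanding $(N-1)\bigl(2(a-2)N + 6a^2-10a+2\bigr) + 6a(N+2a-1)$, the $N$-linear and constant coefficients collapse via $6(a-1)^2 + 6a = 6(a^2-a+1)$ and $-(6a^2-10a+2) + (12a^2-6a) = 6a^2+4a-2$, so the whole bracket equals exactly twice the target cubic. This factor of two, combined with $\sqrt{6a^2} = a\sqrt6$, is what fixes the stated constant. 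Substituting this denominator into the bound above then yields~\eqref{eq:cor_fpgm_pg_conv} at once.

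I expect no conceptual obstacle here: the work is entirely the bookkeeping of quadratic and cubic algebra, and the only real risk is an arithmetic slip when merging the linear and constant terms. A sanity check at $a=2$, where the leading $N^2$ coefficient vanishes and the bound correctly degrades to the $O(1/N)$ rate (consistent with the earlier remark that $a>2$ is needed for the $O(1/N^{\frac{3}{2}})$ rate), would confirm the computation.
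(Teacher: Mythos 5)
Your proposal is correct and matches the paper's proof essentially verbatim: both invoke Thm.~\ref{thm:gen_fpgm_pg} for FPGM-$a$ as an instance of GFPGM and then evaluate $\sum_{k=0}^{N-1}(T_k - t_k^2) + T_{N-1}$ in closed form via the identity~\eqref{eq:Ti_ti} and standard power sums, arriving at $\frac{N}{6a^2}\bigl((a-2)N^2 + 3(a^2-a+1)N + (3a^2+2a-1)\bigr)$. The only difference is cosmetic bookkeeping (you merge over a common denominator $12a^2$ where the paper factors out $\frac{N}{2a^2}$ first), and your algebra checks out.
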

\begin{proof}
With $T_i = \frac{(i+1)(i+2a)}{2a}$ and~\eqref{eq:Ti_ti},
Thm.~\ref{thm:gen_fpgm_pg} implies~\eqref{eq:cor_fpgm_pg_conv}
using
\begin{align*}
&\sum_{k=0}^{N-1}(T_k - t_k^2) + T_{N-1}
        = \sum_{k=0}^{N-1}\paren{\frac{(k+1)(k+2a)}{2a} - \frac{(k+a)^2}{a^2}}
                + \frac{N(N+2a-1)}{2a} \\
        =& \sum_{k=0}^{N-1}\paren{\frac{(a-2)k^2 + a(2a-3)k}{2a^2}}
		+ \frac{N(N+2a-1)}{2a} \\
        =& \frac{N}{2a^2}\paren{\frac{(a-2)(N-1)(2N-1)}{6}
		+ \frac{a(2a-3)(N-1)}{2} + a(N+2a-1)} \\
	=& \frac{N((a-2)N^2 + 3(a^2-a+1)N + (3a^2+2a-1))}{6a^2}
.\end{align*}
\end{proof}

\noindent
FPGM-$a$ for any $a>2$
has a~\cgm bound~\eqref{eq:cor_fpgm_pg_conv}
that is asymptotically $\frac{a}{2\sqrt{a-2}}$-times larger 
than the bound~\eqref{eq:fpgm_pg_conv} of FPGM-\OPG.
This gap reduces to $\sqrt{2}$ at best when $a=4$,
which is clearly better than that of FPGM-$\sigma$.
Therefore, this FPGM-$a$ algorithm will be useful
for minimizing the~\cgm with a rate $O(1/N^{\frac{3}{2}})$
without selecting $N$ in advance.

\section{Discussion}
\label{sec:disc}

\subsection{Summary of analytical worst-case bounds
on the cost function and the~\cgm}

Table~\ref{tab:rate} summarizes 
the \emph{asymptotic} worst-case bounds 
of all algorithms discussed 
in this paper.
(Note that the bounds are not guaranteed to be tight.)
In Table~\ref{tab:rate},
FPGM and FPGM-\OPG provide the best known analytical worst-case bounds
for decreasing the cost function and the~\cgm respectively.
When one does not want to select $N$ in advance
for decreasing the~\cgm,
FPGM-$a$ will be a useful alternative to FPGM-\OPG.

\begin{table}[!ht]
\footnotesize
\centering
\begin{tabular}{|l|c|c|c|}
\hline
\multirow{2}{*}{Algorithm}   
	& \multicolumn{2}{c|}{Asymptotic worst-case bound} 
	& Require selecting \\
\cline{2-3}
& Cost function ($\times LR^2$) & Proximal gradient ($\times LR$)& $N$ in advance \\
\hline
PGM       
  & $\frac{1}{2} N^{-1}$ & $2 N^{-1}$ & No  \\ \hline
FPGM 
  & $\bm{2 N^{-2}}$  & $2 N^{-1}$ & No  \\ \hline
FPGM-$\sigma$ ($0< \sigma < 1$) 
  & $\frac{2}{\sigma^2} N^{-2}$ 
  & $\frac{2\sqrt{3}}{\sigma^2}\sqrt{\frac{1+\sigma}{1-\sigma}} N^{-\frac{3}{2}}$
  & \multirow{2}{*}{No} \\
FPGM-$\paren{\sigma\!=\!0.78}$ 
  & $3.3 N^{-2}$
  & $16.2 N^{-\frac{3}{2}}$
  & \\ \hline
FPGM-$\paren{\m\!=\!\fNm}$  
  & $4.5 N^{-2}$ 
  & $5.2 N^{-\frac{3}{2}}$ 
  & Yes \\ \hline
{\bf FPGM-\OPG}
  & $4 N^{-2}$  & $\bm{4.9 N^{-\frac{3}{2}}}$ & Yes \\ \hline
{\bf FPGM-$a$ ($a > 2$)} 
  & $a N^{-2}$ & $\frac{a\sqrt{6}}{\sqrt{a-2}} N^{-\frac{3}{2}}$ 
  & \multirow{2}{*}{No} \\
{\bf FPGM-$\paren{a\!=\!4}$}
  & $4 N^{-2}$ & $6.9 N^{-\frac{3}{2}}$ & \\
\hline
\end{tabular}
\caption{
Asymptotic worst-case bounds
on the cost function $F(\x_N) - F(\x_*)$
and the norm of the~\cgm $\min_{\x\in\Om}||\tnabla F(\x)||$
of PGM, FPGM, FPGM-$\sigma$, FPGM-\m, FPGM-\OPG, and FPGM-$a$.
(The cost function bound for FPGM-\m in the table corresponds
to the bound for FPGM after $\m$ iterations
because a tight bound for the final $N$th iteration is unknown.
The bound on $\min_{i\in\{0,\ldots,N\}}||\tnablasig F(\y_i)||$
is used for FPGM-$\sigma$.)} 
\label{tab:rate}
\end{table}

\subsection{Tight worst-case bounds on the cost function 
and the smallest~\cgm norm}
\label{sec:disc2}

Since none of the bounds presented in Table~\ref{tab:rate} 
are guaranteed to be tight, 
we modified the code\footnote{
The code in Taylor~\etal~\cite{taylor:17:ewc}
currently does not provide a tight bound of the norm of the~\cgm
(and the subgradient), 
so we simply added a few lines
to compute a tight bound.
}
(using SDP solvers~\cite{Lofberg2004,sturm:99:us1})
in Taylor~\etal~\cite{taylor:17:ewc} 
to compare tight (numerical) bounds
for the cost function and the~\cgm
in Tables~\ref{tab:bound} and~\ref{tab:sgbound} respectively
for $N = 1,2,4,10,20,30,40,47,50$. 
This numerical bound is guaranteed to be tight 
when the large-scale condition is satisfied~\cite{taylor:17:ewc}.
Taylor~\etal~\cite[Fig.~1]{taylor:17:ewc} 
already studied a tight worst-case bound 
on the cost function decrease of FPGM numerically,
and found that 
the analytical bound~\eqref{eq:fv_fpgm}
is asymptotically tight.
Table~\ref{tab:bound}
additionally provides numerical tight bounds on the cost function of all algorithms
presented in this paper, 
also suggesting that our relaxation of the cost function form of the PEP
from~\eqref{eq:PEP} to~\eqref{eq:D}
is asymptotically tight (for some algorithms).
In addition, the trend of the tight bounds of the~\cgm in Table~\ref{tab:sgbound}
follows that of the bounds in Table~\ref{tab:rate}.
However, there is gap between them
that is not asymptotically tight,
unlike the gap between the bounds of the cost function 
in Tables~\ref{tab:rate} and~\ref{tab:bound}.
In particular, the numerical tight bound for the~\cgm of FPGM in Table~\ref{tab:sgbound}
has a rate faster than the known rate $O(1/N)$ in Thm.~\ref{thm:gen_fpgm_pg}.
We leave reducing this gap
for the bounds on the norm of the~\cgm as future work,
possibly with a tighter relaxation of PEP.
In addition, FPGM-$\paren{m\!=\!\fNm}$
has a numerical tight bound in Table~\ref{tab:sgbound}
that is even slightly better than
that of FPGM-\OPG, 
unlike our expectation from the analytical bounds in Table~\ref{tab:rate}
and Sec.~\ref{sec:fpgmocg}.
This shows room for improvement in optimizing the step coefficients of~\FO
with respect to the~\cgm,
again possibly with a tighter relaxation of PEP.

\begin{table}[!h]
\footnotesize
\centering
\begin{tabular}{|c|L|L|l|l|L|L|}
\hline
\multirow{2}{*}{N}
    & \multirow{2}{*}{PGM}
        & \multirow{2}{*}{FPGM}     & FPGM      & FPGM    & FPGM    & FPGM \\
    &          &          & -$\paren{\sigma\!=\!0.78}$
                                        & -$\paren{\m\!=\!\fNm}$
                                                & -\OPG     & -$\paren{a\!=\!4}$ \\
\hline
1   & $4.00$   & $4.00$    & $2.43$   & $4.00$   & $4.00$   & $4.00$   \\
2   & $8.00$   & $8.00$    & $4.87$   & $8.00$   & $8.00$   & $8.00$   \\
4   & $16.00$  & $19.35$   & $11.77$  & $17.13$  & $17.60$  & $17.23$   \\
10  & $40.00$  & $79.07$   & $48.11$  & $56.47$  & $59.25$  & $55.88$  \\
20  & $80.00$  & $261.66$  & $159.19$ & $163.75$ & $170.10$ & $159.17$  \\
30  & $120.00$ & $546.51$  & $332.49$ & $321.56$ & $331.97$ & $312.03$  \\
40  & $160.00$ & $932.89$  & $567.57$ & $502.37$ & $544.55$ & $514.73$  \\
47  & $188.00$ & $1263.58$ & $768.76$ & $675.68$ & $723.06$ & $686.33$ \\
50  & $200.00$ & $1420.45$ & $864.20$ & $752.90$ & $807.66$ & $767.37$ \\
\hline
Empi. $O(\cdot)$
    & $N^{-1.00}$ & $N^{-1.89}$ & $N^{-1.89}$
    & $N^{-1.75}$ & $N^{-1.79}$ & $N^{-1.80}$ \\
\hline
Known $O(\cdot)$
    & $N^{-1}$ & $N^{-2}$ & $N^{-2}$
    & $N^{-2}$ & $N^{-2}$ & $N^{-2}$ \\
\hline
\end{tabular}
\caption{
Tight worst-case bounds
on the cost function
$\Frac{LR^2}{(F(\x_N) - F(\x_*))}$
of PGM, FPGM, FPGM-$\paren{\sigma\!=\!0.78}$,
FPGM-$\paren{\m\!=\!\fNm}$,
FPGM-\OPG, and FPGM-$\paren{a\!=\!4}$.
We computed empirical rates
by assuming that the bounds follow the form $bN^{−c}$
with constants $b$ and $c$,
and then by estimating $c$ from points $N = 47, 50$.
Note that the corresponding empirical rates
are underestimated due to the simplified exponential model.
}
\label{tab:bound}
\end{table}

\begin{table}[!h]
\footnotesize
\centering
\begin{tabular}{|c|L|L|l|l|L|L|}
\hline
\multirow{2}{*}{N}   
    & \multirow{2}{*}{PGM}      
	& \multirow{2}{*}{FPGM}     & FPGM      & FPGM    & FPGM    & FPGM \\
    &          &          & -$\paren{\sigma\!=\!0.78}$ 
					& -$\paren{\m\!=\!\fNm}$
						& -\OPG     & -$\paren{a\!=\!4}$ \\
\hline
1   & $1.84$   & $1.84$   & $1.18$   & $1.84$   & $1.84$   & $1.84$   \\
2   & $2.83$   & $2.83$   & $1.78$   & $2.83$   & $2.83$   & $2.83$   \\
4   & $4.81$   & $5.65$   & $3.50$   & $5.09$   & $5.21$   & $5.12$   \\
10  & $10.80$  & $13.24$  & $8.74$   & $14.91$  & $15.60$  & $14.76$  \\
20  & $20.78$  & $27.19$  & $18.83$  & $39.70$  & $39.61$  & $29.21$  \\
30  & $30.78$  & $43.49$  & $30.82$  & $64.45$  & $64.40$  & $47.14$  \\
40  & $40.78$  & $61.76$  & $44.39$  & $92.82$  & $91.99$  & $67.82$  \\
47  & $47.77$  & $75.60$  & $54.73$  & $113.92$ & $113.41$ & $83.67$ \\
50  & $50.77$  & $81.78$  & $59.35$  & $123.54$ & $123.17$  & $90.78$ \\
\hline
Empi. $O(\cdot)$
    & $N^{-0.98}$ & $N^{-1.27}$ & $N^{-1.31}$
    & $N^{-1.31}$ & $N^{-1.33}$ & $N^{-1.32}$ \\
\hline
Known $O(\cdot)$
    & $N^{-1}$ & $N^{-1}$ & $N^{-\frac{3}{2}}$
    & $N^{-\frac{3}{2}}$ & $N^{-\frac{3}{2}}$ & $N^{-\frac{3}{2}}$ \\
\hline
\end{tabular}
\caption{
Tight worst-case bounds
on the norm of the~\cgm
$\Frac{LR}{\paren{\min_{\x\in\Om}||\tnabla F(\x)||}}$
of PGM, FPGM, FPGM-$\paren{\sigma\!=\!0.78}$, 
FPGM-$\paren{\m\!=\!\fNm}$, 
FPGM-\OPG, and FPGM-$\paren{a\!=\!4}$.
Empirical rates were computed as described in Table~\ref{tab:bound}.
(The bound for FPGM-$\sigma$ uses 
$\min_{\x\in\Om}||\tnablasig F(\x)||$.)
}
\label{tab:sgbound}
\end{table}

\subsection{Tight worst-case bounds on the final~\cgm}
\label{sec:disc3}

This paper focused on analyzing the worst-case bound
of the \emph{smallest}~\cgm among all iterates
($\min_{\x\in\Om}||\tnabla F(\x)||$) in addition to the cost function,
whereas the~\cgm at the \emph{final} iterate ($||\tnabla F(\x_N)||$)
could be also considered (see Appendix~\ref{appen4}). 
For example,
the~\cgm bounds~\eqref{eq:pg_pgm} and~\eqref{eq:pg_fpgmh} 
for PGM and FPGM-\m
also apply to the final~\cgm,
and using~\eqref{eq:pg_decr}
we can easily derive a (loose) worst-case bound
on the final~\cgm for other algorithms,
\eg,
such a final~\cgm bound for GFPGM is as follows:
\begin{align}
||\tnabla F(\x_N)|| 
	&\stackrel{\eqref{eq:pg_decr}}{\le}
		\sqrt{2L(F(\x_N) - F(\pL(\x_N)))}
        \le 
		\sqrt{2L(F(\x_N) - F(\x_*))} 
	\label{eq:lpg} \\
        &\stackrel{\eqref{eq:fv_gen_fpgm}}{\le} 
		\frac{LR}{\sqrt{T_{N-1}}} \nonumber
.\end{align}
Since the optimal rate for decreasing the cost function is $O(1/N^2)$,
the~\cgm worst-case bound~\eqref{eq:lpg} 
can provide only a rate $O(1/N)$ at best.
For completeness of the discussion,
Table~\ref{tab:lgbound} reports
tight numerical bounds for the \emph{final}~\cgm.
Here, FPGM, FPGM-$\paren{\sigma\!=\!0.78}$, and 
FPGM-$\paren{a\!=\!4}$
have empirical rates of the worst-case bounds
in Table~\ref{tab:lgbound}
that are slower than
those in Table~\ref{tab:sgbound},
unlike the other three including FPGM-\OPG.

\begin{table}[!h]
\footnotesize
\centering
\begin{tabular}{|c|L|L|l|l|L|L|}
\hline
\multirow{2}{*}{N}
    & \multirow{2}{*}{PGM}
        & \multirow{2}{*}{FPGM}     & FPGM      & FPGM    & FPGM    & FPGM \\
    &          &          & -$\paren{\sigma\!=\!0.78}$
                                        & -$\paren{\m\!=\!\fNm}$
                                                & -\OPG     & -$\paren{a\!=\!4}$ \\
\hline
1   & $1.84$   & $1.84$   & $1.18$   & $1.84$   & $1.84$   & $1.84$   \\
2   & $2.83$   & $2.83$   & $1.78$   & $2.83$   & $2.83$   & $2.83$   \\
4   & $4.81$   & $5.65$   & $3.50$   & $5.09$   & $5.21$   & $5.12$   \\
10  & $10.80$  & $12.68$  & $8.41$   & $14.91$  & $15.60$  & $14.76$  \\
20  & $20.78$  & $22.02$  & $14.26$  & $39.65$  & $39.10$  & $25.96$  \\
30  & $30.78$  & $31.26$  & $20.12$  & $64.40$  & $63.40$  & $34.21$  \\
40  & $40.78$  & $40.46$  & $25.97$  & $92.78$  & $90.16$  & $42.39$  \\
47  & $47.77$  & $46.89$  & $30.06$  & $113.92$ & $110.12$ & $48.13$ \\
50  & $50.77$  & $49.65$  & $31.81$  & $123.53$ & $118.99$ & $50.59$ \\
\hline
Empi. $O(\cdot)$
    & $N^{-0.98}$ & $N^{-0.92}$ & $N^{-0.92}$
    & $N^{-1.31}$ & $N^{-1.25}$ & $N^{-0.81}$ \\
\hline
Known $O(\cdot)$
    & $N^{-1}$ & $N^{-1}$ & $N^{-1}$
    & $N^{-\frac{3}{2}}$ & $N^{-1}$ & $N^{-1}$ \\
\hline
\end{tabular}
\caption{
Tight worst-case bounds
on the norm of the final~\cgm
$\Frac{LR}{||\tnabla F(\x_N)||}$
of PGM, FPGM, FPGM-$\paren{\sigma\!=\!0.78}$,
FPGM-$\paren{\m\!=\!\fNm}$,
FPGM-\OPG, and FPGM-$\paren{a\!=\!4}$.
Empirical rates were computed as described in Table~\ref{tab:bound}.
(The bound for FPGM-$\sigma$ uses
$||\tnablasig F(\x_N)||$.)
}
\label{tab:lgbound}
\end{table}

To best of our knowledge,
FPGM-\m (or algorithms that similarly perform accelerated algorithms in the beginning
and run PGM for the remaining iterations)
is known only to have a rate $O(1/N^{\frac{3}{2}})$ in~\eqref{eq:pg_fpgmh}
for decreasing the final~\cgm,
while FPGM-\OPG was also found to      
inherit such fast rate in Table~\ref{tab:lgbound}.
Therefore, searching for first-order methods
that have a worst-case bound on the final~\cgm
that is lower than that of FPGM-\m (and FPGM-\OPG),
and that possibly do not require knowing $N$ in advance
is an interesting open problem.
Note that
a regularization technique in~\cite{nesterov:12:htm}
that provides a faster rate $O(1/N^2)$
(up to a logarithmic factor)
for decreasing the final gradient norm
for smooth convex minimization
can be easily extended for 
rapidly minimizing the final~\cgm with such rate
for the composite problem~\eqref{eq:prob};
however, that approach requires knowing $R$ in advance.

\subsection{Tight worst-case bounds on the final subgradient}
\label{sec:disc4}

This paper has mainly focused
on the norm of the composite gradient mapping
based on~\eqref{eq:dualgrad},
instead of the subgradient norm that is of primary interest
in the dual problem
(see \eg, \cite{devolder:12:dst,necoara:16:ica,nesterov:12:htm}).
Therefore to have a better sense of subgradient norm bounds,
we computed tight numerical bounds 
on the \emph{final}\footnote{
Using modifications of the code in~\cite{taylor:17:ewc}
to compute tight bounds on the \emph{final} subgradient norm
was easier than for the \emph{smallest} subgradient norm
among all iterates.
Even without the \emph{smallest} subgradient norm bounds,
the bounds on the \emph{final} subgradient norm 
in Table~\ref{tab:gbound} (compared to Table~\ref{tab:lgbound})
provide some insights (beyond~\eqref{eq:dualgrad})
on the relationship between 
the bounds on the subgradient norm and the~\cgm norm
as discussed in Sec.~\ref{sec:disc4}.
We leave further modifying the code in~\cite{taylor:17:ewc}
for computing tight bounds on the \emph{smallest} subgradient norm
or other criteria
as future work.
}
subgradient norm $||F'(\x_N)||$
in Table~\ref{tab:gbound}
and compared them with Table~\ref{tab:lgbound}.

For all six algorithms, empirical rates in Table~\ref{tab:gbound}
are similar to those for the final~\cgm in Table~\ref{tab:lgbound}.
In particular, the subgradient norm bounds for 
the three algorithms PGM, FPGM-$\paren{\m\!=\!\fNm}$,
and FPGM-\OPG
are almost identical to those in Table~\ref{tab:lgbound}
except for the first few iterations,
eliminating the concern of using~\eqref{eq:dualgrad}
for such cases. 
On the other hand, 
the other three algorithms 
FPGM, 
FPGM-$\paren{\sigma\!=\!0.78}$,
and FPGM-$\paren{a\!=\!4}$
almost tightly satisfy the inequality~\eqref{eq:dualgrad}
for most $N$,
and thus have bounds on the final subgradient
that are about twice larger than those on the final~\cgm.
Therefore, regardless of~\eqref{eq:dualgrad}, 
Table~\ref{tab:gbound}
further supports the use of FPGM-$\paren{\m\!=\!\fNm}$ and FPGM-\OPG 
over FPGM and other algorithms
in dual problems.

\begin{table}[!h]
\footnotesize
\centering
\begin{tabular}{|c|L|L|l|l|L|L|}
\hline
\multirow{2}{*}{N}
    & \multirow{2}{*}{PGM}
        & \multirow{2}{*}{FPGM}     & FPGM      & FPGM    & FPGM    & FPGM \\
    &          &          & -$\paren{\sigma\!=\!0.78}$    
                                        & -$\paren{\m\!=\!\fNm}$
                                                & -\OPG     & -$\paren{a\!=\!4}$ \\
\hline                                          
1   & $1.00$   & $1.00$   & $0.61$   & $1.00$   & $1.00$   & $1.00$   \\
2   & $2.00$   & $2.00$   & $1.22$   & $2.00$   & $2.00$   & $2.00$   \\
4   & $4.00$   & $4.83$   & $2.94$   & $4.28$   & $4.40$   & $4.31$   \\
10  & $10.00$  & $7.60$   & $4.67$   & $14.12$  & $14.81$  & $12.10$  \\
20  & $20.00$  & $12.58$  & $7.67$   & $38.29$  & $36.65$  & $16.85$  \\
30  & $30.00$  & $17.63$  & $10.74$  & $62.71$  & $60.40$  & $21.61$  \\
40  & $40.00$  & $22.67$  & $13.80$  & $91.00$  & $86.62$  & $26.47$  \\
47  & $47.00$  & $26.20$  & $15.94$  & $112.01$ & $106.21$ & $29.91$ \\
50  & $50.00$  & $27.71$  & $16.86$  & $121.53$ & $114.93$ & $31.39$ \\
\hline
Empi. $O(\cdot)$
    & $N^{-1.00}$ & $N^{-0.91}$ & $N^{-0.91}$
    & $N^{-1.32}$ & $N^{-1.27}$ & $N^{-0.78}$ \\
\hline 
Known $O(\cdot)$
    & $N^{-1}$ & $N^{-1}$ & $N^{-1}$
    & $N^{-\frac{3}{2}}$ & $N^{-1}$ & $N^{-1}$ \\
\hline
\end{tabular}
\caption{
Tight worst-case bounds
on the subgradient norm
$\Frac{LR}{||F'(\x_N)||}$
of PGM, FPGM, FPGM-$\paren{\sigma\!=\!0.78}$,
FPGM-$\paren{\m\!=\!\fNm}$,
FPGM-\OPG, and FPGM-$\paren{a\!=\!4}$,
where $F'(\x) \in \partial F(\x)$ is a subgradient.
Empirical rates were computed as described in Table~\ref{tab:bound}.
}
\label{tab:gbound}
\end{table}

\section{Conclusion}
\label{sec:conc}

This paper analyzed and developed
fixed-step first-order methods (\FO) 
for nonsmooth composite convex cost functions.
We showed an alternate proof of FPGM (FISTA) using PEP,
and suggested that FPGM (FISTA)
results from optimizing the step coefficients of~\FO
with respect to the cost function form of the (relaxed) PEP.
We then described a new generalized version of FPGM
and analyzed its worst-case bound using the (relaxed) PEP
over both the cost function and the norm of the~\cgm.
Furthermore, we
optimized the step coefficients of~\FO
with respect to the~\cgm form of the (relaxed) PEP,
yielding FPGM-\OPG,
which could be useful particularly when tackling dual problems.

Our relaxed PEP provided tractable analysis
of the optimized step coefficients of~\FO
with respect to the cost function and the norm of the~\cgm,
but the relaxation is not guaranteed to be tight and
the corresponding accelerations of PGM (FPGM and FPGM-\OPG) 
are thus unlikely to be optimal.
Therefore, finding optimal step coefficients of~\FO
over the cost function and the norm of the~\cgm
remain as future work.
Nevertheless, the proposed FPGM-\OPG that optimizes 
the~\cgm form of the relaxed PEP
and the FPGM-$a$ (for any $a>2$)
may be useful in dual problems.

\section*{Software}

Matlab codes
for the SDP approaches in 
Sec.~\ref{sec:fista},
Sec.~\ref{sec:fpgmocg}
and Sec.~\ref{sec:disc}
are available at
https://gitlab.eecs.umich.edu/michigan-fast-optimization.

\appendix

\section{Derivation of the dual formulation~\eqref{eq:D} of~\eqref{eq:pPEP}}
\label{appen1}

The derivation below is similar to~\cite[Lemma 2]{drori:14:pof}.

We replace $\max_{\G,\bmdel}\allowbreak LR^2\delta_{N-1}$ of~\eqref{eq:pPEP}
by $\min_{\G,\bmdel}\{-\delta_{N-1}\}$ for convenience in this section.
The corresponding dual function of such~\eqref{eq:pPEP} is then defined as
\begin{align*}
H(\bmlam,\bmtau;\bmh) 
	= \min_{\substack{\G\in\Reals^{N\times d}, \\ \bmdel\in\Reals^N}} 
	\left\{\cL(\G,\bmdel,\bmlam,\bmtau;\bmh)
		 := \cL_1(\bmdel,\bmlam,\bmtau) + \cL_2(\G,\bmlam,\bmtau;\bmh)\right\}
\end{align*}
for dual variables $\bmlam=[\lambda_1,\cdots,\lambda_{N-1}]^\top\in\Reals_+^{N-1}$ 
and $\bmtau=[\tau_0,\cdots,\tau_{N-1}]^\top\in\Reals_+^N$,
where
$\cL(\G,\bmdel,\bmlam,\bmtau;\bmh)$ is a Lagrangian function, and
\begin{align*}
\cL_1(\bmdel,\bmlam,\bmtau) 
	&:= -\delta_{N-1} 
	+ \sum_{i=1}^{N-1}\lambda_i(\delta_i - \delta_{i-1})
	+ \sum_{i=0}^{N1}\tau_i\delta_i, \\
\cL_2(\G,\bmlam,\bmtau;\bmh) 
	&:= \sum_{i=1}^{N-1}\lambda_i\Tr{\G^\top\A_{i-1,i}(\bmh)\G}
	+ \sum_{i=0}^{N-1}\tau_i\Tr{\G^\top\D_i(\bmh)\G + \bmnu\bmu_i^\top\G}.
	\nonumber
\end{align*} 
Here, $\min_{\del}\cL_1(\bmdel,\bmlam,\bmtau) = 0$ for any $(\bmlam,\bmtau)\in\Lambda$
where $\Lambda$ is defined in~\eqref{eq:Lam},
and $\min_{\del}\cL_1(\bmdel,\bmlam,\bmtau)=-\infty$ otherwise.

For any given unit vector $\bmnu$,
\cite[Lemma 1]{drori:14:pof} implies
\begin{align*}
\min_{\G\in\Reals^{N\times d}} \cL_2(\G,\bmlam,\bmtau) 
	= \min_{\bmw\in\Reals^N} \cL_2(\bmw\bmnu^\top,\bmlam,\bmtau)
,\end{align*}
and thus for any $(\bmlam,\bmtau)\in\Lambda$, we can rewrite the dual function as
\begin{align*}
H(\bmlam,\bmtau;\bmh) 
	&= \min_{\bmw\in\Reals^N}\{\bmw^\top\bmS(\bmh,\bmlam,\bmtau)\bmw + \tau^\top\bmw\} \\
	&= \max_{\gamma\in\Reals}\left\{-\frac{1}{2}\gamma\;:\;
		\bmw^\top\bmS(\bmh,\bmlam,\bmtau)\bmw + \bmtau^\top\bmw
		\ge -\frac{1}{2}\gamma,\; \forall\bmw\in\Reals^N\right\}
		\nonumber \\
	&= \max_{\gamma\in\Reals}\left\{-\frac{1}{2}\gamma\;:\;
        \left(\begin{array}{cc}
                \bmS(\bmh,\bmlam,\bmtau) & \frac{1}{2}\bmtau \\
                \frac{1}{2}\bmtau^\top & \frac{1}{2}\gamma
        \end{array}\right)
        \succeq 0
        \right\}
,\end{align*}
where $\bmS(\bmh,\bmlam,\bmtau)$ is defined in~\eqref{eq:S}.
Therefore the dual problem of~\eqref{eq:pPEP} becomes~\eqref{eq:D},
recalling that we previously replaced 
$\max_{\G,\bmdel} LR^2\delta_{N-1}$ of~\eqref{eq:pPEP}
by $\min_{\G,\bmdel} \{-\delta_{N-1}\}$.

\section{Proof of Prop.~\ref{prop:fpgm}}
\label{appen2}

The proof is similar to~\cite[Prop. 2, 3 and 4]{kim:16:ofo}.

We first show that $\{\hkip\}$ in~\eqref{eq:h_gen_fpgm} is equivalent to
\begin{align}
\hkip = \begin{cases}
                \frac{(T_i-t_i)t_{i+1}}{t_iT_{i+1}}\hki
                        & i=0,\ldots,N-1,\;k=0,\ldots,i-2, \\
                \frac{(T_i-t_i)t_{i+1}}{t_iT_{i+1}}(h_{i,i-1} - 1),
                        & i=0,\ldots,N-1,\;k=i-1, \\
                1 + \frac{(t_i - 1)t_{i+1}}{T_{i+1}},
                        & i=0,\ldots,N-1,\;k=i,
        \end{cases}
\label{eq:hh_gen_fpm1}
\end{align}
We use the notation $\hki'$ for the coefficients~\eqref{eq:h_gen_fpgm}
to distinguish from~\eqref{eq:hh_gen_fpm1}.
It is obvious that $\hiip' = \hiip, i=0,\ldots,N-1$,
and we clearly have
\begingroup
\allowdisplaybreaks
\begin{align*}
\himip' &= \frac{t_{i+1}}{T_{i+1}}\paren{t_{i-1} - \himi'}
        = \frac{t_{i+1}}{T_{i+1}}
        \paren{t_{i-1} - \paren{1 + \frac{(t_{i-1}-1)t_i}{T_i}}} \\
        &= \frac{(t_{i-1}-1)(T_i - t_i)t_{i+1}}{T_iT_{i+1}}
        = \frac{(T_i - t_i)t_{i+1}}{t_iT_{i+1}}(\himi - 1)
        = \himip
\end{align*}
\endgroup
We next use induction by assuming $\hkip' = \hkip$
for $i=0,\ldots,n-1,\;k=0,\ldots,i$. We then have
\begingroup
\allowdisplaybreaks
\begin{align*}
\hknp' &= \frac{t_{n+1}}{T_{n+1}}
                \paren{t_k - \sum_{j=k+1}^n\hkj'}
        = \frac{t_{n+1}}{T_{n+1}}
                \paren{t_k - \sum_{j=k+1}^{n-1}\hkj' - \hkn'} \\
        &= \frac{t_{n+1}}{T_{n+1}}
                \paren{\frac{T_n}{t_n}\hkn' - \hkn'}
        = \frac{(T_n - t_n)t_{n+1}}{t_nT_{n+1}}\hkn
        = \hknp
\end{align*}
\endgroup

Next, using~\eqref{eq:hh_gen_fpm1},
we show that~\FO with~\eqref{eq:h_gen_fpgm}
is equivalent to the GFPGM.
We use induction, and for clarity, we use the notation
$\y_0',\cdots,\y_N'$ for~\FO with~\eqref{eq:hh_gen_fpm1}.
It is obvious that $\y_0' = \y_0$, and we have
\begingroup
\allowdisplaybreaks
\begin{align}
\y_1' &= \y_0' - \frac{1}{L}h_{1,0}\tnabla F(\y_0')
        = \y_0 - \frac{1}{L}\paren{1 + \frac{(t_0 - 1)t_1}{T_1}}\tnabla F(\y_0) 
      	\nonumber \\
	&= \x_1 + \frac{(T_0 - t_0)t_1}{t_0T_1}(\x_1 - \x_0)
		+ \frac{(t_0^2 - T_0)t_1}{t_0T_1}(\x_1 - \y_0)
        = \y_1
	\nonumber
,\end{align}
\endgroup
since $T_0 = t_0$.
Assuming $\y_i' = \y_i$ for $i=0,\ldots,n$, we then have
\begingroup
\allowdisplaybreaks
\begin{align*}
&\quad \y_{n+1}'
= \y_n' - \frac{1}{L}h_{n+1,n}\tnabla F(\y_n')
        - \frac{1}{L}h_{n+1,n-1}\tnabla F(\y_{n-1}')
        - \frac{1}{L}\sum_{k=0}^{n-2}h_{n+1,k}\tnabla F(\y_k') \\
&= \y_n - \frac{1}{L}\paren{1 + \frac{(t_n-1)t_{n+1}}
                {T_{n+1}}}\tnabla F(\y_n) \\
        &\quad - \frac{1}{L}\frac{(T_n-t_n)t_{n+1}}{t_nT_{n+1}}
                (h_{n,n-1} - 1)\tnabla F(\y_{n-1})
        - \frac{1}{L}\sum_{k=0}^{n-2}
                \frac{(T_n - t_n)t_{n+1}}{t_nT_{n+1}}
                h_{n,k}\tnabla F(\y_k) \\
&= \x_{n+1} - \frac{1}{L}\frac{(t_n^2 - T_n)t_{n+1}}{t_nT_{n+1}}\tnabla F(\y_n) \\
	&\quad - \frac{1}{L}\frac{(T_n-t_n)t_{n+1}}{t_nT_{n+1}}
        \paren{\tnabla F(\y_n) - \tnabla F(\y_{n-1}) + \sum_{k=0}^{n-1}h_{n,k}\tnabla F(\y_k)} \\
&= \x_{n+1} + \frac{(t_n^2 - T_n)t_{n+1}}{t_nT_{n+1}}(\x_{n+1} - \y_n) \\
        &\quad + \frac{(T_n-t_n)t_{n+1}}{t_nT_{n+1}}
                \paren{-\frac{1}{L}\tnabla F(\y_n) + \frac{1}{L}\tnabla F(\y_{n-1})
                + \y_n - \y_{n-1}} \\
&= \x_{n+1} + \frac{(T_n-t_n)t_{n+1}}{t_nT_{n+1}}(\x_{n+1} - \x_n)
        + \frac{(t_n^2 - T_n)t_{n+1}}{t_nT_{n+1}}(\x_{n+1} - \y_n)
= \y_{n+1}
.\end{align*}
\endgroup

\section{Proof of Prop.~\ref{prop:fpgm_}}
\label{appen3}

The proof is similar to~\cite[Prop. 1 and 5]{kim:16:ofo}.

We use induction, and for clarity, we use the notation
$\y_0',\cdots,\y_N'$ for~\FO with~\eqref{eq:h_gen_fpgm}
that is equivalent to GFPGM by Prop.~\ref{prop:fpgm}.
It is obvious that $\y_0' = \y_0$, and we have
\begin{align}
\y_1' &= \y_0' - \frac{1}{L}h_{1,0}\tnabla F(\y_0')
        = \y_0 - \frac{1}{L}\paren{1 + \frac{(t_0 - 1)t_1}{T_1}}\tnabla F(\y_0)
        \nonumber \\
        &= \paren{1-\frac{t_1}{T_1}}\paren{\y_0 - \frac{1}{L}\tnabla F(\y_0)} 
		+ \frac{t_1}{T_1}\paren{\y_0 - \frac{1}{L}t_0\tnabla F(\y_0)}
	\nonumber \\
	&= \paren{1-\frac{t_1}{T_1}}\x_1 + \frac{t_1}{T_1}\z_1
	= \y_1
        \nonumber
.\end{align}
Assuming $\y_i' = \y_i$ for $i=0,\ldots,n$, we then have
\begingroup
\allowdisplaybreaks
\begin{align*}
&\quad \y_{n+1}'
= \y_n' - \frac{1}{L}h_{n+1,n}\tnabla F(\y_n')
        - \frac{1}{L}\sum_{k=0}^{n-1}h_{n+1,k}\tnabla F(\y_k') \\
&= \y_n - \frac{1}{L}\paren{1 + \frac{(t_n-1)t_{n+1}}
                {T_{n+1}}}\tnabla F(\y_n)
        - \frac{1}{L}\sum_{k=0}^{n-1}
                \frac{t_{n+1}}{T_{n+1}}
                \paren{t_k - \sum_{j=k+1}^n h_{j,k}}
                \tnabla F(\y_k) \\
&= \paren{1 - \frac{t_{n+1}}{T_{n+1}}}\paren{\y_n - \frac{1}{L}\tnabla F(\y_n)} \\
	&\quad + \frac{t_{n+1}}{T_{n+1}}
        \paren{\y_n
                - \frac{1}{L}\sum_{k=0}^n t_k\tnabla F(\y_k)
                + \frac{1}{L}\sum_{k=0}^{n-1}\sum_{j=k+1}^n h_{j,k}\tnabla F(\y_k)} \\
&= \paren{1 - \frac{t_{n+1}}{T_{n+1}}}\paren{\y_n - \frac{1}{L}\tnabla F(\y_n)}
        + \frac{t_{n+1}}{T_{n+1}}
        \paren{\y_0 - \frac{1}{L}\sum_{k=0}^n t_k\tnabla F(\y_k)} \\
&= \paren{1 - \frac{t_{n+1}}{T_{n+1}}}\x_{n+1}
        + \frac{t_{n+1}}{T_{n+1}}\z_{n+1}
.\end{align*}
\endgroup

\section{Discussion on the choice of $\Om$ in Sec.~\ref{sec:pep,spgrad1}}
\label{appen4}

Our formulation~\eqref{eq:PEP__} examines
the set $\Om = \{\y_0,\cdots,\allowbreak\y_{N-1},\x_N\}$
and eventually leads to the best known analytical bound
on the norm of the~\cgm in Thm.~\ref{thm:fpgm_pg}
among fixed-step first-order methods.

An alternative formulation would be to use the set
$\{\y_0,\cdots,\y_{N-1}\}$
(i.e., excluding the point $\x_N$).
For this alternative,
we could simply replace the inequality~\eqref{eq:xNp_bound}
with the condition $0 \le F(\y_{N-1}) - F(\x_*)$ to derive a slightly different relaxation.
(One could use other conditions
at the point $\y_{N-1}$ as in~\cite{taylor:17:ewc}
for a tight relaxation, 
but this is beyond the scope of this paper.)
We found that the corresponding (loose) relaxation~\eqref{eq:pPEP__} 
using $\{\y_0,\cdots,\y_{N-1}\}$
leads to
a larger upper bound than~\eqref{eq:gen_fpgm_pg_conv}
in Thm.~\ref{thm:gen_fpgm_pg}
for the set $\Om$.

Another alternative would be to use the set
$\{\x_0,\cdots,\x_N\}$,
which we leave as future work.
Nevertheless, the inequality in Lemma~\ref{lem:pgmono}
provides a bound for that set $\{\x_0,\cdots,\x_N\}$ as seen
in Thm.~\ref{thm:gen_fpgm_pg} and~\ref{thm:fpgm_pg}.

We could also consider
the final point $\x_N$ (or $\y_N$) in~\eqref{eq:PEP__}
instead of the minimum over a set of points.
However, the corresponding (loose) relaxation~\eqref{eq:pPEP__}
yielded only an $O(1/N)$ bound at best 
(even for the corresponding optimized step coefficients of~\eqref{eq:HD__})
on the final~\cgm norm. 
So we leave finding its tighter relaxation as future work.
Note that Table~\ref{tab:lgbound}
reports tight numerical bounds on the composite gradient mapping norm
at the final point $\x_N$ of algorithms considered.

\section{Proof of Equation~\eqref{eq:app5} in Thm.~\ref{thm:fpgm_pg}}
\label{appen5}

\begingroup
\allowdisplaybreaks
\begin{align*}
&\;\sum_{k=0}^{N-1} \left(T_k - t_k^2\right) + T_{N-1} \\
        =&\; \sum_{k=m}^{N-1} \left(t_{m-1}^2 + \sum_{l=m}^k t_l - t_k^2\right)
                + t_{m-1}^2 + \sum_{l=m}^{N-1} t_l \\
        =&\; (N - m + 1)t_{m-1}^2
                + \sum_{k=m}^{N-1}\left(\sum_{l=m}^k\frac{N-l+1}{2}
                        - \left(\frac{N-k+1}{2}\right)^2\right)
                + \sum_{l=m}^{N-1} \frac{N-l+1}{2} \\
        =&\; (N - m + 1)t_{m-1}^2
                + \sum_{k'=0}^{N-m-1}\left(\sum_{l'=0}^{k'}\frac{N-l'-m+1}{2}
                        - \left(\frac{N-k'-m+1}{2}\right)^2\right) \\
	&\! + \sum_{l'=0}^{N-m-1} \frac{N-l'-m+1}{2}  \\
        =&\; (N - m + 1)t_{m-1}^2 \\
	&\! +\!\!\! \sum_{k=0}^{N-m-1}\!\!\left(\frac{(N-m+1)(k+1)}{2} - \frac{k(k+1)}{4}
                        - \frac{(N-m+1)^2 - 2(N-m+1)k + k^2}{4}\right) \\
        &\! + \frac{(N-m+1)(N-m)}{2} - \frac{(N-m-1)(N-m)}{4}\\
        =&\; (N - m + 1)t_{m-1}^2
                +\!\!\! \sum_{k=0}^{N-m-1}\!\left(
                        - \frac{k^2}{2}
                        + (N-m+3/4)k
                        - \frac{(N-m-1)(N-m+1)}{4}
                        \right) \\
        &\! + \frac{(N-m)(N-m+3)}{4} \\
        =&\; (N - m + 1)t_{m-1}^2
                - \frac{(N-m-1)(N-m-1/2)(N-m)}{6} \\
	&\! + \frac{(N-m-1)(N-m)(N-m+3/4)}{2} \\
        &\! - \frac{(N-m-1)(N-m)(N-m+1)}{4}
                +\frac{(N-m)(N-m+3)}{4} \\
        \ge&\; \frac{(N-m+1)(m + 1)^2}{4}
                + \frac{(N-m-1)(N-m)^2}{3}
                - \frac{(N-m)^2(N-m+1)}{4} \\
        \ge&\; \frac{(N-m-1)(N-m)^2}{3} \\
        \ge&\; \frac{1}{24}(N-2)N^2
,\end{align*}
where
$m=\fNh \ge \frac{N-1}{2}$,
$N-m \ge \frac{N}{2}$,
and $t_{m-1} \ge \frac{m+1}{2}$~\eqref{eq:ti_rule}.
\endgroup

\section*{Acknowledgements}
The authors would like to thank the anonymous referees
for very useful comments that have improved the quality of this paper.

\ifsiopt
	\newpage
	\bibliographystyle{siamplain}
\else
	\bibliographystyle{spmpsci}
\fi
\bibliography{master,mastersub}   

\end{document}